\newcommand{\be}{\begin{enumerate}}
\newcommand{\ee}{\end{enumerate}}
\newcommand{\R}{\mathbb{R}}
\newcommand{\N}{\mathbb{N}}
\newcommand{\C}{\textnormal{C}}
\newcommand{\W}{\textnormal{W}}
\newcommand{\B}{\textnormal{B}}
\renewcommand{\H}{\textnormal{H}}
\newcommand{\Ld}{\textnormal{L}}
\newcommand{\Ll}{\textnormal{L}_{\textnormal{loc}}}
\newcommand{\T}{\mathbb{T}}
\newcommand{\X}{\textnormal{X}}
\newcommand{\V}{\textnormal{V}}
\newcommand{\Y}{\textnormal{Y}}
\newcommand{\Z}{\textnormal{Z}}
\renewcommand{\div}{\textnormal{div}}
\newcommand{\Ldiv}{\Ld_{\textnormal{div}}}
\newcommand{\Hdiv}{\H_{\textnormal{div}}}
\newcommand{\ffi}{\varphi}
\newcommand{\dd}{\mathrm{d}}
\newtheorem{thm}{Theorem}
\newtheorem{coro}{Corollary}
\newtheorem{defi}{Definition}
\newtheorem{propo}{Proposition}
\newtheorem{lem}{Lemma}
\newtheorem{rem}{Remark}
\title{Uniqueness of the solution to the 2D Vlasov-Navier-Stokes system}
\author{Daniel Han-Kwan\footnote{CNRS and CMLS, \'Ecole polytechnique, France}, \'Evelyne Miot\footnote{CNRS and IF, Universit\'e Grenoble-Alpes, France}, Ayman Moussa\footnote{Sorbonne Universités, UPMC Univ. Paris 6 \& CNRS, UMR 7598 LJLL} and Iv\'an Moyano\footnote{DPMMS, University of Cambridge, United Kingdom}}
\begin{document}
\maketitle 

\begin{abstract}
We prove a uniqueness result for weak solutions to the Vlasov-Navier-Stokes system in two dimensions, both in the whole space and in the periodic case, under a mild decay condition on the initial distribution function. The main result is achieved by combining methods from optimal transportation (introduced in this context by G. Loeper) with the use of Hardy's maximal function, in order to obtain some fine Wasserstein-like estimates for the difference of two solutions of the Vlasov equation.
\end{abstract}

\tableofcontents

\section{Introduction}
 Fluid-kinetic systems describe the behavior of a dispersed phase of particles within a fluid. These systems allow a lot of different modelling possibilities for both the fluid and the dispersed phase, typically described by a Vlasov equation, the combination of which leads to challenging mathematical issues. We refer to the introduction of \cite{GHM} for a recent state of the art on the subject. \par 
 
A paradigm of such models is the Vlasov-Navier-Stokes system: among other applications, it has been used to describe the transport of particles in the upperways for medical purposes (see for instance \cite{BGLM}) and at the same time it offers important mathematical issues to deal with: existence of solution \cite{ano-bou,bougramou}, long-time behavior \cite{choikwon,GHM}, asymptotic limit \cite{bendesmou}, controllability \cite{Moy}... \par 

Existence for fluid-kinetic systems is nowadays well-understood, and there is no exception for the Vlasov-Navier-Stokes system: global weak solutions are known to exist for periodic boundary conditions (\cite{bou-des-grand-mou}), bounded domains with various boundary condition (\cite{chengyu,wangyu}) or even moving domains (\cite{bougramou}) and if the initial data is small enough, the system is well-posed in a narrower set of solutions (\cite{choikwon,Moy}). %But what about the uniqueness of the above global weak solutions ? 
In contrast, the uniqueness issue has not been very much studied.
Of course, there is no hope to obtain in 3D a result that might not even hold for the Navier-Stokes system alone, but it is reasonable to hope for uniqueness in 2D, because in that case Leray solutions are indeed unique. The problem is yet challenging because the Vlasov-Navier-Stokes system is strongly coupled: even though both equations taken alone (fluid or kinetic) are well-posed, some effort is needed to recover well-posedness for the whole system. Up to our knowledge, uniqueness of weak solutions has not been proved yet and we intend to fill this gap. % and this is where our study is located in the literature. %: we prove the uniqueness of the weak solution of the 2D Vlasov-Navier-Stokes system. 
Our method of proof relies strongly on the regularity properties of the Leray solution in 2D and the optimal transport approach introduced by Loeper for the proof of uniqueness of the Vlasov-Poisson system \cite{Loeper}.
\section{Notations}
The norm of a vector space $X$ will always be denoted $\|\cdot\|_X$, with an exception for the $\textnormal{L}^p$ spaces for which we will often use the notation $\|\cdot\|_p$ if there is no ambiguity. We omit the exponent for the functional spaces constituted of vector fields: we denote for instance  $\textnormal{L}^2(\R^2)$ instead of $\textnormal{L}^2(\R^2)^2$ the set of all vector fields ${\R^2 \rightarrow\R^2}$ whose norm is square-integrable. 

\vspace{2mm}

We denote by $\T^2$ the 2D torus and we will work on $\Omega=\T^2$ or $\Omega=\R^2$. We denote by $\mathscr{D}_{\div}(\Omega)$ the set of smooth $\R^2$ valued vector-fields having $0$ divergence and compact support in $\Omega$ and by $\mathscr{D}(\Omega\times\R^2)$ the set of smooth functions with compact support in $\Omega\times\R^2$ (and similarly for $\mathscr{D}(\R_+\times\Omega\times\R^2)$). The closures of $\mathscr{D}_{\div}(\Omega)$ for the $\Ld^2(\Omega)$ and $\H^1(\Omega)$ are respectively denoted $\Ldiv^2(\Omega)$ and $\Hdiv^1(\Omega)$, and we write $\Hdiv^{-1}(\Omega)$ for the dual of the latter. For a function $h:\R_+ \times \Omega\times\R^2\rightarrow \R_+$ we will use the following notation for $k\in\N$ 
\begin{align*}
m_k h(t,x) &:= \int_{\R^2} h(t,x,v) |v|^k \,\dd v,\\
M_k h(t) &:= \int_{\Omega} m_k h(t,x)\,\dd x.
\end{align*}

\vspace{2mm} 

We will often use the notation $A\lesssim B$ to say $A\leq C B$ where $C$ is some constant which does not depend on any variable defining either $A$ or $B$.

\section{The Vlasov-Navier-Stokes system and main result}
\indent The purpose of this paper is to study uniqueness of  weak solutions of the Vlasov-Navier-Stokes, i.e., for pairs $(u,f)$, where $u=u(t,x)$ is a velocity field and $f=f(t,x,v)$ is a distribution function, satisfying the system
\begin{align*}
\partial_t u + (u\cdot \nabla_x)u -\Delta_x u + \nabla_x p &= j_f-\rho_f u, \\
\div_x u & = 0, \\
\partial_t f + v \cdot \nabla_x f + \div_v [f(u-v)] &=0,\\
u(0,\cdot) &= u_0(\cdot),\\
f(0,\cdot,\cdot) &= f_0(\cdot,\cdot),
\end{align*}
studied on $\R_+\times\Omega$ and $\R_+\times \Omega\times\R^2$, 
for $\Omega= \T^2$ or $\R^2$,
where 
\begin{align*}
\rho_f(t,x) &:= \int_{\R^2} f(t,x,v)\,\dd v, \\
j_f(t,x) &:= \int_{\R^2} vf(t,x,v)\,\dd v.
\end{align*}

\subsection{Comments on the existence of solutions}
In the case $\Omega=\T^2$, we infer directly from  \cite{bou-des-grand-mou}\footnote{Strictly speaking the analysis of \cite{bou-des-grand-mou} considers the 3D case $\T^3$, which extends trivially to $\T^2$.} the existence of global weak solutions for the former system, when the initial data $(u_0,f_0)$ satisfies : 
\begin{align}
\label{eq:initu}u_0 &\in\Ldiv^2(\Omega), \\
\label{eq:initf}0\leq f_0 &\in\Ld^1 \cap\Ld^\infty(\Omega\times\R^2),\\
\label{eq:initfbis}(x,v)\mapsto f_0(x,v)|v|^2 &\in\Ld^1(\Omega\times\R^2).
\end{align}
The case $\Omega=\R^2$ has not been written explicitely in the literature, but one can easily adapt the proof of the periodic framework, and recover global existence of weak solution for initial data verifying \eqref{eq:initu} -- \eqref{eq:initfbis}. For the sake of completeness, we give in Appendix A a brief justification of this statement. The typical solutions that one can build by this procedure have the following behavior : the velocity field $u=u(t,x)$ satisfies the Navier-Stokes equation in the weak sense (Leray), whereas the distribution function $f=f(t,x,v)$ is a weak (renormalized) solution of the corresponding transport equation. More precisely, we shall work within the following framework.

\begin{defi}\label{def:sol}
Consider $(u_0,f_0)$ satisfying \eqref{eq:initu} -- \eqref{eq:initfbis}. A solution of the Vlasov-Navier-Stokes system with initial condition $(u_0,f_0)$ is a pair $(u,f)$ such that
\begin{align*}
u &\in \Ll^\infty(\R_+;\Ld^2(\Omega))\cap\Ll^2(\R_+;\H^1_\div(\Omega))\cap \W^{1,2}_{\textnormal{loc}}(\R_+;\H^{-1}_\div(\Omega)),\\
f&\in  \Ll^\infty(\R_+;\Ld^1\cap\Ld^\infty(\Omega\times\R^2)),\\
j_f &\in\Ll^1(\R_+\times\Omega),\\
\rho_f u &\in \Ll^1(\R_+\times\Omega),
\end{align*}
and such that for any test functions $(\psi,\phi)\in\mathscr{C}^\infty(\R_+;\mathscr{D}_\div(\Omega))\times\mathscr{D}(\R_+ \times\Omega\times\R^2)$,  the following identities hold for all $t\geq 0$:
\begin{align*}
 \int_{0}^t \int_{\Omega} u \cdot[\partial_t \psi + (u\cdot \nabla) \psi + \Delta \psi] +\int_0^t \int_{\Omega}(j_f-\rho_f u)\cdot \psi &= \langle  u(t), \psi(t)\rangle - \int_{\Omega} u_0\cdot  \psi(0)  \\
  \int_{\R_+} \int_{\Omega}\int_{\R^2} f[\partial_t \phi+ v\cdot\nabla_x \phi + (u-v)\cdot \nabla_v \phi] &=  -\int_{\Omega}\int_{\R^2} f_0 \phi(0),
\end{align*}
where the duality bracket in the r.h.s. of the first equation makes sense because one has in particular that $u\in\mathscr{C}^0(\R_+;\Hdiv^{-1}(\Omega))$.
\end{defi}

\subsection{Main result: uniqueness of weak solutions}

The uniqueness theorem that we are about to state and prove can be applied to the weak solutions described in the previous section, provided that the initial distribution function $f_0$ satisfies, in addition to \eqref{eq:initu} -- \eqref{eq:initfbis}, a certain decay estimate. More precisely, we shall impose the following.
\begin{defi}\label{def:adm}
The pair  $(u_0,f_0)$ is an \emph{admissible} initial datum if  \eqref{eq:initu} -- \eqref{eq:initfbis} hold, along with the following condition :
\begin{equation}
\label{assu:lp}
\forall R>0,\qquad\Gamma_R(f_0):(t,v) \mapsto (1+|v|^2)\sup_{\C^R_{t,v}} f_0 \in \Ll^\infty(\R_+;\Ld^1(\R^2)),
\end{equation}
where $\C^R_{t,v}:=\Omega \times\B(e^t v,R)$.
\end{defi}
\begin{rem}
Condition \eqref{assu:lp} is reminiscent of the one introduced in \cite{lionsperth} for the study of the Vlasov-Poisson system. Observe that the exponential factor appearing here comes from the dissipation term $-v$ in the drag force (see (\ref{eq:char1}) and (\ref{eq:char2})).
\end{rem}

\begin{rem}
A sufficient condition for Assumption \eqref{assu:lp} to hold is the following pointwise decay condition: 
\begin{equation*}
\exists q>4 \quad \textrm{ such that }  \quad   f_0(x,v)  \lesssim \frac{1}{1+|v|^q},\quad \forall (x,v)\in \Omega\times \R^2.
\end{equation*}
Indeed, in that case
\begin{align*}
\sup_{\C^R_{t,v}} f_0 \lesssim \frac{1}{1+|e^t |v|-R|^q},
\end{align*}
and one checks directly that 
\begin{align*}
\int_{\R^2} \frac{1+|v|^2}{1+|e^t |v|-R|^q} \dd v < + \infty.
\end{align*} Since the previous integral continuously depends on $t\in[0,T]$, condition (\ref{assu:lp}) follows.
\end{rem}

\medskip

We are now in position to state our main result:

\begin{thm}\label{thm:uni}
  Consider an admissible initial datum $(u_0,f_0)$ in the sense of Definition \ref{def:adm}. Then the corresponding Vlasov-Navier-Stokes system has a unique weak solution in the sense of Definition \ref{def:sol}.
\end{thm}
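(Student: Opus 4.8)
The plan is to fix a time horizon $T>0$, take two weak solutions $(u_1,f_1)$ and $(u_2,f_2)$ issued from the same admissible datum $(u_0,f_0)$, and show that a suitable distance between them stays $0$ on $[0,T]$. I would first lift each Vlasov equation onto its characteristics $s\mapsto (X_i,V_i)(s,z)$ solving $\dot X_i=V_i$ and $\dot V_i=u_i(s,X_i)-V_i$, so that $f_i(t)$ is the push-forward of $f_0$ along $z\mapsto(X_i,V_i)(t,z)$, and $\rho_{f_i}$, $j_{f_i}$ are the push-forwards of $f_0$ and $vf_0$. The first real step is to convert the decay hypothesis \eqref{assu:lp} into uniform bounds $\rho_{f_i},j_{f_i}\in\Ll^\infty(\R_+;\Ld^1\cap\Ld^\infty(\Omega))$ (with a bit of extra integrability, say $\Ld^2$): the exponential weight $e^tv$ in \eqref{assu:lp} is tailored to this, since the friction $-V_i$ in the characteristic field localises the velocity integral defining $\rho_{f_i}$, exactly in the spirit of the Lions--Perthame moment argument.

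I would then run two coupled estimates. On the fluid side, the difference $w:=u_1-u_2$ solves a forced Navier--Stokes system, and an energy estimate gives $\tfrac12\tfrac{d}{dt}\|w\|_2^2+\|\nabla w\|_2^2=-\int(w\cdot\nabla)u_2\cdot w+\int(j_{f_1}-j_{f_2})\cdot w-\int(\rho_{f_1}u_1-\rho_{f_2}u_2)\cdot w$. The quadratic term is absorbed by the Ladyzhenskaya inequality $\|w\|_4^2\lesssim\|w\|_2\|\nabla w\|_2$ (this is where $2$D is crucially used), the friction splits as $\rho_{f_1}|w|^2+(\rho_{f_1}-\rho_{f_2})u_2\cdot w$ with the first term $\le\|\rho_{f_1}\|_\infty\|w\|_2^2$, and the remaining forcing $j_{f_1}-j_{f_2}$ and $(\rho_{f_1}-\rho_{f_2})u_2$ is controlled, through Loeper-type $\Hdiv^{-1}$/Wasserstein estimates built on the $\Ld^\infty$ density bounds, by a Wasserstein-like distance between $f_1(t)$ and $f_2(t)$.

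On the kinetic side, because $u_i$ lies only in $\Ll^2(\R_+;\Hdiv^1)$ in $2$D — so that the Hardy--Littlewood maximal function $\mathcal M|\nabla u_i|$ is merely $\Ld^2$ and never $\Ld^\infty$ — a plain $W_2$-Gronwall is hopeless, and I would instead follow Crippa--De Lellis and monitor the log-regularised functional $g_\delta(t):=\int\log\!\big(1+\delta^{-2}(|X_1-X_2|^2+|V_1-V_2|^2)\big)f_0\,\dd z$. Differentiating, the friction is dissipative and the only rough contribution comes from $u_1(X_1)-u_1(X_2)$, handled by the maximal-function bound $|u_1(a)-u_1(b)|\lesssim|a-b|(\mathcal M|\nabla u_1|(a)+\mathcal M|\nabla u_1|(b))$: the factor $|a-b|$ cancels the denominator produced by the logarithm, and pushing forward by $X_i$ together with the $\Ld^\infty$--$\Ld^2$ bounds on $\rho_{f_i}$ and the $\Ld^2$-boundedness of $\mathcal M$ yields a bound $\lesssim\int_0^t\|\nabla u_1\|_2$ that is finite but independent of $\delta$. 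The solution-dependent field difference contributes the term $u_1(X_2)-u_2(X_2)=w(t,X_2)$, giving $\lesssim\delta^{-1}\int_0^t\|w\|_2$, so that altogether $g_\delta(t)\le C_1+C_2\,\delta^{-1}\int_0^t\|w(s)\|_2\,\dd s$ with $C_1$ independent of $\delta$.

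It remains to close the loop, which I expect to be the main obstacle. The fluid estimate needs a genuine Wasserstein distance, whereas the kinetic estimate only produces the logarithmic functional $g_\delta$; the two must be reconciled by a Chebyshev split at scale $\delta$, bounding the transport cost by $\delta$ plus the measure of the far-apart set $\{|X_1-X_2|^2+|V_1-V_2|^2>\delta^2\}\lesssim g_\delta/\log(\cdot)$. Feeding this into the energy estimate and running a Gronwall (Osgood-type) argument on the pair $(\|w\|_2^2,g_\delta)$, optimised in $\delta$, should force $w\equiv0$ on $[0,T]$, i.e. $u_1=u_2$; then the term $C_2\,\delta^{-1}\int_0^t\|w\|_2$ vanishes, so $g_\delta\le C_1$ uniformly in $\delta$, and letting $\delta\to0$ (dividing by $\log(1/\delta)\to\infty$) gives $(X_1,V_1)=(X_2,V_2)$ for $f_0$-a.e.\ $z$, hence $f_1=f_2$. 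The delicate points are precisely this uniform-in-$\delta$ decoupling and the propagation of the weighted $\Ld^\infty$ bounds from \eqref{assu:lp}, both of which hinge on the fine interplay between the maximal-function estimate and the optimal-transport structure.
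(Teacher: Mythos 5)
Your overall architecture (characteristics, a fluid energy estimate for $w=u_1-u_2$, a transport-type functional for $f_1-f_2$, a coupled Gronwall) matches the paper's, but there are two genuine gaps. First, you never establish the regularity on which the whole scheme rests. Your conversion of \eqref{assu:lp} into $\Ld^\infty$ bounds on $\rho_{f_i},j_{f_i}$ (the paper's Proposition \ref{prop:lp}) requires the trajectories' velocity shift to be uniformly bounded, i.e.\ $u\in\Ld^1(0,T;\Ld^\infty(\R^2))$; this does \emph{not} follow from the energy estimate and is proved in the paper via a Chemin--Gallagher-type argument (Lemma \ref{raduc}: maximal regularity for Stokes, Gagliardo--Nirenberg, and the heat-flow characterization of $\B^{-1}_{\infty,2}$), yielding in fact $u\in\Ld^2(0,T;\Ld^\infty)$. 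The same omission undermines your premise that ``$u_i$ lies only in $\Ll^2(\R_+;\H^1)$'' so that a plain Wasserstein--Gronwall is hopeless: 2D Leray solutions with $\Ld^2$ forcing are log-Lipschitz (Chemin--Lerner, Theorem \ref{thm:flow}/Lemma \ref{raducbis}), and the paper runs exactly the ``plain'' estimate $Q'(t)\leq 2Q+\gamma(t)\Psi(Q)+K\|w\|_2^2$ on Loeper's quadratic functional $Q$, closing by Osgood. Your Crippa--De Lellis functional $g_\delta$ is thus an unnecessary detour, and a costly one, since it is what creates your $\delta$-optimization problem.

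Second, and more seriously, the step you yourself flag as ``the main obstacle'' is precisely where your route breaks and where the paper's key new idea lives. The coupling term $A(t)$ in \eqref{def-A} pairs $j_1-j_2$ and $(\rho_2-\rho_1)u_2$ against $w$, which is only $\H^1$, not Lipschitz; and unlike Vlasov--Poisson there is no elliptic gain ($E=\nabla\Delta^{-1}(\rho-1)$) turning a Wasserstein distance between densities into an $\Ld^2$ bound on the force difference, so your ``Loeper-type $\Hdiv^{-1}$/Wasserstein'' control of the forcing is unjustified --- the paper explicitly notes that Loeper's direct approach seems impossible here. Your Chebyshev split does not repair this: on the near set you still face $w(\X_1^t)-w(\X_2^t)$ with $w$ non-Lipschitz, and on the far set your constant $C_1$ is $O(1)$, so after dividing by $\log(1/\delta)$ and paying the $\delta^{-1}$ factor no closable Osgood inequality emerges. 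The paper's resolution is to apply the maximal-function inequality \eqref{ineq:max:2} to $w$ \emph{itself} along characteristics, $|w(\X_1^t)-w(\X_2^t)|\lesssim|\X_1^t-\X_2^t|\bigl(M|\nabla w|(\X_1^t)+M|\nabla w|(\X_2^t)\bigr)$, use the $\Ld^\infty$ moment bounds \eqref{ineq:crucial} to get $|A_2(t)|\leq \frac{K}{\alpha}\gamma(t)Q(t)+K\alpha\|\nabla w(t)\|_2^2$, and \emph{absorb} the dissipative term into the left-hand side for small $\alpha$ --- whereas you apply the maximal function only to $\nabla u_1$ inside the kinetic step. Without this absorption mechanism your fluid and kinetic estimates do not couple into a closed inequality, so the proposal as written does not prove the theorem. (A minor point: once $w\equiv 0$, $f_1=f_2$ follows at once from DiPerna--Lions uniqueness for the common field; your $\delta\to 0$ argument there is superfluous.)
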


Since the cases $\Omega= \T^2$ or $\R^2$ 
are almost identical, we shall focus on the case $\Omega=\R^2$. The only required modification whenever $\Omega = \T^2$ is explained in Appendix B.

\subsection{Strategy and outline}

\subsubsection{Strategy of the proof}
In order to prove Theorem \ref{thm:uni}, we shall consider two weak solutions of the Vlasov-Navier-Stokes system, which must eventually coincide. After exhibiting several \emph{a priori} estimates, we follow a strategy based on four points.  

\begin{enumerate}

\item In the first step, we exploit the usual energy estimate for the Navier-Stokes system, which allows to identify some key terms to be controlled otherwise. 

\item In the second step, we introduce a functional to compute, in a weak sense, the distance between the two solutions of the transport equation. 

\item In third step we make a key use of the Hardy's maximal function to show that the problematic terms can be handled  with the functional introduced in the previous step. 

\item Finally, in the fourth step, a Gronwall-like argument allows to conclude that the two solutions must coincide, as the distance between the solutions of the transport equation vanishes.

\end{enumerate}

We give a more detailed description of this road map in Section \ref{sec: strategydetailed}, once all the ingredients and tools have been introduced.

\subsubsection{Outline of the paper}

Section \ref{sec: thmuni} is devoted to the proof of our main result. To do this, we introduce in Section \ref{sec: maintools} the main tools and ideas needed in the proof. In Section \ref{sec: aprioriestimates} we derive some energy estimates for the weak solutions of the Vlasov-Navier-Stokes system, especially concerning the propagation of kinetic moments and a useful energy-dissipation inequality for the fluid. This allows to develop in Section \ref{sec: strategydetailed} the main steps of the proof of Theorem \ref{thm:uni}. 
In Section \ref{sec: commentsperspectives}, we gather some comments and perspectives. 
In Appendix \ref{sec: appendixWeak} we give a sketch of the proof of existence of weak solutions in the whole space $\R^2$. 
Finally, in Appendix \ref{sec: AppendixTorus} we make some comments on the adaptation of our main result to the 2D torus.

\section{Proof of Theorem \ref{thm:uni}}
\label{sec: thmuni}
In order to simplify the presentation, we shall assume from now on that 
\begin{align}\label{ass:f0norm}
\int_{\R^2}\int_{\R^2} f_0(x,v)\,\dd x\,\dd v = 1.
\end{align}
This normalization does not really affect the structure of the proof but lighten several computations. We will however point out the only two spots in which one should pay attention when this normalization does not hold (see Remarks \ref{rem:normalized} and \ref{rem:normalized2} for details).

\subsection{Some of the main tools}
\label{sec: maintools}
% The general strategy we follow  in order to prove Theorem \ref{thm:uni} is inspired by Loeper's approach for the problem of uniqueness for the Vlasov-Poisson system \cite{Loeper} (see also \cite{eve}). Loosely speaking this consists in introducing and studying Wasserstein-like functionals in order to measure the distances between two hypothetical solutions of the Vlasov equation. 

The goal of this section is to highlight three specific tools needed in our proof. 

\subsubsection{Log-Lipschitz regularity for the Navier-Stokes system}

The first one concerns the log-Lipschitz regularity of solutions to the 2D Navier-Stokes equation and its very useful consequences on the associated flow. 
\begin{thm}[Chemin-Lerner]
\label{thm:flow}
Consider $u_0\in\Ld^2(\R^2)$ such that $\div_x u_0=0$ and $F\in\Ll^2(\R_+;\Ld^2(\R^2))$. Then the unique Leray solution $u$ of the Navier-Stokes equation initialized with $u_0$ and with source term $F$ belongs to $\Ll^1(\R_+;\mathscr{C}^0(\R^2))$ and satisfies that for any $\eta \in (1/2,1]$ and for any $x,y \in \R^2$ with $|x-y|\leq 1/e$, 
\begin{align*}
|u(t,x)-u(t,y)| \leq \gamma_\eta(t) |x-y| (\log|x-y|^{-1})^\eta,
\end{align*}
for some function $\gamma_\eta\in\Ll^1(\R_+)$.
\end{thm}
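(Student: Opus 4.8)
The plan is to establish log-Lipschitz regularity for the 2D Navier-Stokes flow, which is a classical result going back to the analysis of Yudovich-type estimates. The strategy splits into two parts: first, bootstrap the standard Leray energy estimates to gain enough integrability and regularity on $u$; second, exploit the fact that in two dimensions the vorticity $\omega = \operatorname{curl} u$ obeys a favorable equation, so that the velocity $u = K * \omega$ recovered through the Biot-Savart kernel $K$ enjoys log-Lipschitz control in space.

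**First**, I would record the energy estimates coming from the Leray construction. Testing the Navier-Stokes equation against $u$ and using $\operatorname{div} u_0 = 0$ together with $F \in L^2(\mathbb{R}_+; L^2(\mathbb{R}^2))$ yields the bounds
\begin{align*}
u \in L^\infty(\mathbb{R}_+; L^2(\mathbb{R}^2)) \cap L^2(\mathbb{R}_+; \dot{H}^1(\mathbb{R}^2)).
\end{align*}
In 2D these can be upgraded: the $\dot{H}^1$ control on $u$ is precisely $L^2$ control on $\omega$, and a further maximal regularity/energy argument for the vorticity equation $\partial_t \omega + u\cdot\nabla\omega - \Delta\omega = \operatorname{curl} F$ gives additional integrability in time. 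The point is to reach enough regularity so that for a.e.\ $t$ one has both $\omega(t) \in L^2 \cap L^p$ for some $p > 2$, with the relevant norms integrable (or square-integrable) in time.

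**Second**, I would invoke the Biot-Savart law $u = K * \omega$ with $K(x) = \frac{1}{2\pi}\frac{x^\perp}{|x|^2}$ and carry out the standard kernel splitting. Writing $u(t,x) - u(t,y) = \int (K(x-z) - K(y-z))\,\omega(t,z)\,\dd z$, one splits the integral into the regions $|x-z| \leq 2|x-y|$, an intermediate annulus $2|x-y| \leq |x-z| \leq 1$, and the far field $|x-z| \geq 1$. On the near region one uses $|K|$ integrability against $\|\omega\|_{L^p}$; on the annulus one uses the mean-value bound $|K(x-z)-K(y-z)| \lesssim |x-y|/|x-z|^2$ which produces the logarithmic factor $\log|x-y|^{-1}$ upon integration against $\|\omega\|_{L^\infty}$ or $\|\omega\|_{L^2}$ (via Hölder and the size of the annulus); the far field contributes a linear-in-$|x-y|$ term controlled by $\|\omega\|_{L^2}$. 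Collecting these, with the condition $|x-y| \leq 1/e$ ensuring $\log|x-y|^{-1} \geq 1$ so the log dominates, one obtains the bound with a prefactor $\gamma_\eta(t)$ built from the spatial norms of $\omega(t)$. The refinement to general $\eta \in (1/2,1]$ rather than just $\eta = 1$ comes from optimizing the annulus cutoff and using a sharper Hölder exponent, trading a little of the log power against the integrability of $\gamma_\eta$ in time.

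**The main obstacle** is the last clause: proving that the resulting $\gamma_\eta$ genuinely lies in $L^1(\mathbb{R}_+)$, which is a time-integrability statement, not merely a pointwise-in-$t$ estimate. This requires assembling the spatial norms of $\omega(t)$ appearing in the prefactor (some combination of $\|\omega(t)\|_{L^2}$ and $\|\omega(t)\|_{L^p}$) and showing their product is integrable over all of $\mathbb{R}_+$, using the time-integrated energy-dissipation bounds and the decay of the 2D flow. The exponent range $\eta \in (1/2,1]$ is exactly what makes this time-integrability work out, so the delicate part is the interplay between the spatial log-Lipschitz estimate and the temporal summability — one must choose the Hölder exponents so that the norms entering $\gamma_\eta$ are controlled by quantities known to be $L^1$ in time from the Leray bounds. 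Since this is a cited result attributed to Chemin-Lerner, I would expect the authors to invoke it rather than reprove it in full detail, but the sketch above is how one would derive it from scratch.
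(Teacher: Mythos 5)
The paper does not actually prove this theorem: it is quoted as a black box from Chemin--Lerner \cite{chemlen}, with a one-line remark that adding a source with $\Ld^2$ regularity is a straightforward adaptation of their analysis. Your closing expectation on that point is correct. However, your from-scratch sketch via the vorticity and Biot--Savart has a genuine gap, at two levels. First, the hypotheses give only $u_0\in\Ld^2(\R^2)$ and $F\in\Ll^2(\R_+;\Ld^2(\R^2))$, so the initial vorticity is merely $\H^{-1}(\R^2)$ and the forcing in the vorticity equation, $\operatorname{curl} F$, is only $\Ll^2_t\H^{-1}_x$. No energy or maximal-regularity bootstrap on $\partial_t\omega+u\cdot\nabla\omega-\Delta\omega=\operatorname{curl}F$ can then reach $\omega(t)\in\Ld^\infty$; some $\Ld^p$ with $p<\infty$ is the most such an argument can deliver.

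Second --- and this is the step that fails --- the kernel splitting with $\omega\in\Ld^p$, $p<\infty$, produces no logarithm at all. On the annulus $2|x-y|\leq|x-z|\leq 1$, H\"older against $\|\omega\|_{\Ld^p}$ gives a contribution of order $|x-y|\,|x-y|^{-2/p}\|\omega\|_p=|x-y|^{1-2/p}\|\omega\|_p$, i.e.\ plain H\"older continuity, while pairing with $\|\omega\|_{\Ld^2}$ gives only an $O(1)$ bound. The logarithmic factor appears exactly at $p=\infty$ (the Yudovich case), and then only with exponent $\eta=1$. The stronger modulus $|x-y|(\log|x-y|^{-1})^{\eta}$ with $\eta$ close to $1/2$ cannot be reached by ``optimizing the annulus cutoff'': it encodes square-summability across dyadic frequency blocks --- regularity of the type $\B^{1}_{\infty,2}$, which embeds into the modulus class $\tau(\log\tau^{-1})^{1/2}$ --- and this is precisely what Chemin and Lerner extract from the parabolic smoothing of the 2D Navier--Stokes flow via Littlewood--Paley analysis; the restriction $\eta>1/2$ is the price of keeping $\gamma_\eta\in\Ll^1(\R_+)$ near that endpoint. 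So your plan would at best yield $\eta=1$ under a hypothesis ($\omega\in\Ll^1_t\Ld^\infty_x$) that the stated data do not provide, and nothing for $\eta<1$; the difficulty is not merely the time-integrability of $\gamma_\eta$ that you flag, but the spatial estimate itself.
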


\begin{rem}
This is a slight generalization of a result of Chemin and Lerner established in \cite{chemlen}. In their article, there is no source term in the Navier-Stokes equation, but the analysis can be straightforwardly adapted to add a source with $\Ld^2$  regularity.

\end{rem}

\subsubsection{The Loeper's functional}

The second tool is not a turnkey result but rather a method. In his proof of uniqueness for the Vlasov-Poisson system, Loeper introduced in \cite{Loeper}  a functional reminiscent of optimal transportation theory to estimate the distance between two hypothetical measure-valued solutions to the Vlasov equation. We use the very same functional in our proof (see (\ref{eq:Loeper}) for details).

\subsubsection{The Hardy's maximal function}

 The third and last tool is the Hardy's maximal function. Among other numerous applications, the maximal function has been used by Crippa and De Lellis \cite{Crippa-Delellis} to prove uniqueness of the flow associated to Sobolev vector 
 fields. \par 

In this work, we use the following definition\footnote{
For the torus case, we refer to Appendix B.}: for every $g\in \Ld^1_\text{loc}(\R^2)$ the associated maximal function, denoted $Mg$, is defined by 
\begin{align*}
M g(x):=\sup_{r>0}\frac{1}{|\B_r(x)|}\int_{\B_r(x)}|g|(y)\,\dd y, \quad \quad \textrm{for a.e. } x \in \R^2.
\end{align*}
We will rely on the following result, which is a (very) particular case of some of the remarkable properties of the maximal function (see e.g. \cite{Stein} or \cite[Lemma 3]{AcerbiFusco}). 
\begin{propo}\label{propo:max}
 If $g\in\Ld^2(\R^2)$ then so is $M g$. Furthermore,
\begin{align}\label{ineq:max}
\forall g \in \Ld^2(\R^2),\qquad \|M g\|_2 \lesssim \|g\|_2.
\end{align}
Moreover, if $g\in\H^1(\R^2)$ then, for a.e. $x,y\in\R^2$ one has (with a constant independent of $g$)
\begin{align}\label{ineq:max:2}
|g(x)-g(y)|\lesssim |x-y|(M |\nabla g|(x)+M |\nabla g|(y)).
\end{align}
\end{propo}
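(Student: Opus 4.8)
The statement splits into two independent parts that I would treat separately.

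For the $\Ld^2$ bound \eqref{ineq:max}, the plan is to invoke the classical Hardy-Littlewood machinery. I would first establish the weak-type $(1,1)$ estimate $|\{Mg>\lambda\}|\lesssim \lambda^{-1}\|g\|_1$ by a Vitali-type covering argument: at each point $x$ with $Mg(x)>\lambda$ pick a ball $\B_{r}(x)$ on which the average of $|g|$ exceeds $\lambda$, then extract via the $5r$-covering lemma a countable disjoint subfamily whose dilates cover the superlevel set, and sum. Combined with the trivial bound $\|Mg\|_\infty\le\|g\|_\infty$, the Marcinkiewicz interpolation theorem yields $\|Mg\|_p\lesssim\|g\|_p$ for every $1<p\le\infty$, in particular $p=2$.

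The pointwise estimate \eqref{ineq:max:2} is the substantial part, and its core is a local control of oscillation by the maximal function of the gradient. First I would record, for $g\in\H^1(\R^2)$ and a ball $\B_\rho(z)$, the Poincaré inequality
\begin{align*}
\frac{1}{|\B_\rho(z)|}\int_{\B_\rho(z)}|g-g_{\B_\rho(z)}|\lesssim \rho\,\frac{1}{|\B_\rho(z)|}\int_{\B_\rho(z)}|\nabla g|,
\end{align*}
where $g_{\B_\rho(z)}$ is the average of $g$ over $\B_\rho(z)$; when the ball is centered at $x$ the right-hand side is $\lesssim \rho\,M|\nabla g|(x)$. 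Next, at a Lebesgue point $x$, I would telescope over the dyadic balls $\B_{\rho 2^{-k}}(x)$: each consecutive difference of averages is $\lesssim \rho 2^{-k} M|\nabla g|(x)$, the geometric series converges, and the Lebesgue differentiation theorem identifies $\lim_k g_{\B_{\rho 2^{-k}}(x)}=g(x)$, so that
\begin{align*}
|g(x)-g_{\B_\rho(x)}|\lesssim \rho\,M|\nabla g|(x)\qquad\text{for a.e. }x.
\end{align*}

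To compare the two points I would set $r=|x-y|$ and introduce the intermediate ball $B=\B_{3r}(x)$, which contains both $\B_{2r}(x)$ and $\B_{2r}(y)$ with volume ratios bounded by dimensional constants and satisfies $B\subset\B_{4r}(y)$. Splitting
\begin{align*}
|g(x)-g(y)|\le |g(x)-g_{\B_{2r}(x)}|+|g_{\B_{2r}(x)}-g_B|+|g_B-g_{\B_{2r}(y)}|+|g_{\B_{2r}(y)}-g(y)|,
\end{align*}
the outer terms are handled by the telescoping estimate, while the two middle terms are controlled by the Poincaré inequality on $B$, the averages of $|\nabla g|$ over $B$ being bounded by $M|\nabla g|(x)$ and, after enlarging to $\B_{4r}(y)$, by $M|\nabla g|(y)$. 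Summing gives \eqref{ineq:max:2} with a dimensional constant. The main obstacle is the telescoping step and its a.e. validity: one must work at common Lebesgue points of $g$, justify the identification of the limit of averages with $g(x)$, and ensure the exceptional null set is controlled so that the inequality holds for almost every pair $(x,y)$; the covering lemma and the summation of the geometric series are routine.
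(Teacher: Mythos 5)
Your proof is correct; the paper does not prove Proposition \ref{propo:max} at all, but simply cites \cite{Stein} and \cite[Lemma 3]{AcerbiFusco}, and your argument is precisely the standard one from those references: weak-type $(1,1)$ via Vitali covering plus Marcinkiewicz interpolation for \eqref{ineq:max}, and Poincar\'e on dyadic balls with telescoping and Lebesgue differentiation for \eqref{ineq:max:2}. The chaining through the intermediate ball $\B_{3r}(x)$ and the handling of the exceptional null set are sound, so nothing is missing.
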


\subsection{Some \emph{a priori} estimates}
\label{sec: aprioriestimates}

First of all we use the DiPerna-Lions theory (see \cite{DPL}) for linear transport equations and some properties of the Navier-Stokes equation in 2D to prove that solutions to the Vlasov-Navier-Stokes system in the sense of Definition \ref{def:sol} are in fact more workable than they appear. Consider thus such a solution $(u,f)$ in the sense of Definition \ref{def:sol}.

\subsubsection{Propagation of moments}\label{subsubsec:moments}
The Sobolev regularity of $u$ and the integrability of $f$ impose that the latter is actually the renormalized solution of 
\begin{align}
\partial_t f + v \cdot \nabla_x f  + (u-v)\cdot \nabla_v f - 2 f &= 0,  \label{eq:renormalizedVlasov} \\
f(0,\cdot,\cdot) &= f_0. \nonumber
\end{align}
 In particular, $f\in\mathscr{C}^0([0,T];\Ld^p(\R^2))$ for all $p<\infty$ and the weak formulation of the Vlasov equation can thus be extended into the following: for any $\phi\in\mathscr{C}^\infty(\R_+;\mathscr{D}(\R^2\times\R^2))$, for any $t\in\R_+$,
\begin{align*}
  \int_{0}^t \iint_{\R^2 \times \R^2} f[\partial_t \phi+ v\cdot\nabla_x \phi + (u-v)\cdot \nabla_v \phi] &= \iint_{\R^2 \times \R^2} f(t) \phi(t) - \iint_{\R^2 \times \R^2} f_0 \phi(0).
  \end{align*} Observe that, compared to the weak formulation given in Definition \ref{def:sol}, the equality above allows to take into account the initial distribution $f_0$. \par

\begin{lem}\label{lem:interp}
Let $(u,f)$ be a weak solution of the Vlasov-Navier-Stokes system according to Definition \ref{def:sol}. Then the identity
\begin{equation}
M_2 f(t)  + 2 \int_0^t M_2 f(s) \dd s = M_2 f_0 + 2\int_0^t \int_{\R^2} u \cdot j_f(s,x) \,\dd x\,\dd s \label{eq:M2}
\end{equation} holds for any $t\geq 0$.
\end{lem}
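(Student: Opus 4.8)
The plan is to obtain \eqref{eq:M2} by testing the renormalized Vlasov equation against the weight $|v|^2$. Since this weight does not depend on time, the extended weak formulation recalled just above the statement gives, at least formally,
\begin{equation*}
\int_0^t \iint_{\R^2\times\R^2} f\,(u-v)\cdot\nabla_v\big(|v|^2\big) = \iint_{\R^2\times\R^2} f(t)\,|v|^2 - \iint_{\R^2\times\R^2} f_0\,|v|^2 = M_2 f(t) - M_2 f_0,
\end{equation*}
because $\partial_t(|v|^2)=0$ and $v\cdot\nabla_x(|v|^2)=0$. As $\nabla_v(|v|^2)=2v$, the left-hand side equals $2\int_0^t\int_{\R^2} u\cdot j_f - 2\int_0^t M_2 f(s)\,\dd s$, and rearranging yields precisely the claimed identity. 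The whole difficulty is to make this computation rigorous: the weight $|v|^2$ is neither compactly supported nor an admissible test function, and the finiteness of $M_2 f(t)$ is not part of Definition \ref{def:sol}.

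To legitimate the calculation I would introduce a smooth cutoff $\theta_R(v)$, equal to $1$ on $\B(0,R)$, supported in $\B(0,2R)$, with $|\nabla_v\theta_R|\lesssim 1/R$, and apply the extended weak formulation to the admissible test function $\phi_R(x,v):=|v|^2\,\theta_R(v)$. The only contribution absent from the formal computation comes from $\nabla_v\phi_R = 2v\,\theta_R + |v|^2\nabla_v\theta_R$: the term carrying $\nabla_v\theta_R$ is supported on the annulus $R\le|v|\le 2R$, on which it is bounded by $\lesssim R\,(|u|+|v|)$. Passing to the limit $R\to\infty$ thus reduces to two points: showing that this annular remainder, integrated against $f$ in the variables $(s,x,v)$, tends to $0$; and showing that $M_2 f(t)$ is finite, so that the right-hand side of the weak formulation converges by dominated (resp.\ monotone) convergence to $M_2 f(t)-M_2 f_0$.

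The main obstacle is therefore the a priori control of the second moment, which I would establish by running a Gronwall argument directly on the truncated quantity $M_2^R f(t):=\iint f(t)\,|v|^2\theta_R$. Bounding the drag contribution by Young's inequality, $\int_{\R^2} u\cdot j_f \le \tfrac12 M_2 f + \tfrac12\int_{\R^2}\rho_f\,|u|^2$, the delicate term $\int_0^t\int_{\R^2}\rho_f\,|u|^2$ is handled by combining the two-dimensional interpolation estimate $\|\rho_f\|_{\Ld^2}\lesssim \|f\|_\infty^{1/2}(M_2 f)^{1/2}$ (valid since $f\in\Ld^\infty$) with the Ladyzhenskaya inequality $\|u\|_{\Ld^4}^2\lesssim\|u\|_{\Ld^2}\|\nabla u\|_{\Ld^2}$ and the energy bounds $u\in\Ll^\infty(\R_+;\Ld^2)\cap\Ll^2(\R_+;\H^1_\div)$. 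This produces, uniformly in $R$, a (mildly nonlinear) differential inequality for $M_2^R f$ that yields, after Gronwall and monotone convergence as $R\to\infty$, a bound $M_2 f\in\Ll^\infty(\R_+)$ locally in time. Equipped with this uniform control, the annular remainder is dominated by the tail $\int_0^t\iint_{|v|\ge R} f\,(|u|+|v|)\,|v|\,\dd v\,\dd x\,\dd s$, which vanishes as $R\to\infty$; dominated convergence then upgrades the truncated identity into \eqref{eq:M2}.
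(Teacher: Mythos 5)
Your limit passage (Step: dominated convergence once $M_2 f\in\Ll^\infty(\R_+)$ is known) is fine, but the step where you claim the a priori moment bound --- a Gronwall argument on $M_2^R f(t):=\iint f(t)\,|v|^2\theta_R$ that is ``uniform in $R$'' --- has a genuine gap, and it sits precisely in the annular term you identified. With $\theta_R$ radial and non-increasing, the friction part of the force field gives
\begin{equation*}
(-v)\cdot\nabla_v\big(|v|^2\theta_R\big)=-2|v|^2\theta_R+|v|^3\,\big|\theta_R'(|v|)\big|,
\end{equation*}
and the second term is \emph{nonnegative}: the friction $\dot{\V}=-\V$ sweeps mass inward across the annulus, i.e.\ toward the region where $\theta_R=1$, so this flux genuinely increases $M_2^R f$. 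Its size is of order $\iint_{R\le |v|\le 2R} f\,|v|^2$, a tail of the very moment whose finiteness you are trying to prove; it is controlled neither by $M_2^R f$ (on the annulus $\theta_R$ may be close to $0$) nor by any quantity known finite from Definition \ref{def:sol}. The best you obtain is an inequality of the type $\frac{\dd}{\dd t}M_2^R f\lesssim \gamma(t)\,(1+M_2^R f)+M_2^{2R}f$, which couples dyadic scales and does not close under Gronwall at fixed $R$. A second, related circularity: your bound $\|\rho_f\|_2\lesssim \|f\|_\infty^{1/2}(M_2 f)^{1/2}$ invokes the full second moment --- exactly the unknown --- so inside the truncated inequality you must work with the truncated density or first moment (this part is repairable, since $\int f\theta_R\,\dd v\lesssim \|f\|_\infty^{1/2}\big(\int f|v|^2\theta_R\,\dd v\big)^{1/2}$).

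There are two ways to repair this. (i) Keep your strategy but replace the multiplicative cutoff by a \emph{concave} truncation $\phi_R(v)=\beta_R(|v|^2)$, with $\beta_R$ concave, $\beta_R(s)=s$ for $s\le R^2$, constant for large $s$, $0\le\beta_R'\le 1$: the friction contribution becomes $-2\iint f\,\beta_R'(|v|^2)\,|v|^2\le 0$ (no bad annular flux), while concavity gives $s\,\beta_R'(s)\le \beta_R(s)$, and the interpolation \eqref{ineq:interp} applied to $f\beta_R'(|v|^2)$ yields $\big\|\int f\beta_R'(|v|^2)\,|v|\,\dd v\big\|_{4/3}\lesssim \|f\|_\infty^{1/4}\,y_R^{3/4}$ with $y_R:=\iint f\,\beta_R(|v|^2)$; combined with $u\in\Ld^2(0,T;\Ld^4(\R^2))$ (which follows from \eqref{ineq:gn} and the energy bounds), this gives $y_R'\lesssim \|u(t)\|_4(1+y_R)$ uniformly in $R$, and monotone convergence concludes. (ii) The paper's route avoids $v$-truncations altogether: by stability of renormalized (DiPerna--Lions) solutions it approximates $u$ by smooth $u_n$ and $f_0$ by compactly supported $f_{0,n}$, so that $f_n$ is explicit along characteristics and compactly supported in $v$ at each time --- every moment manipulation is then exact --- derives \eqref{ineq:M2fn}, closes the Gronwall through $\|m_1 f_n\|_{4/3}\lesssim M_2 f_n(t)^{3/4}$ (a sublinear nonlinearity), and passes to the limit by strong convergence and Fatou, before extending the weak formulation to the test function $|v|^2$ by approximation. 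Finally, a minor point in your write-up: $|v|^2\theta_R(v)$ is not compactly supported in $x$, so the extended weak formulation quoted above the lemma does not apply verbatim; an additional $x$-cutoff is needed, removed using $f\in\Ll^\infty(\R_+;\Ld^1)$ and $j_f,\ \rho_f u\in\Ll^1(\R_+\times\R^2)$ from Definition \ref{def:sol}.
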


\begin{proof}
Since $f$ is the renormalized solution of (\ref{eq:renormalizedVlasov}), $f$ is also the strong limit (in the sense of $\mathscr{C}^0([0,T];\Ld^p(\R^2))$ for $p<\infty$) of any sequence $(f_n)_n$ solving the Vlasov equation with a regularized version $(u_n)_n$ of the vector field $u$, that approaches it in $\Ll^2(\R_+;\H^1(\R^2))$, and compactly supported initial datum $(f_{0,n})_n$ (approaching $f_0$ adequately). For all $n$, $f_n$ is known explicitely (\emph{via} characteristics curves), for all $t$, $f_n(t)$ is compactly supported and $(f_n)_n$ is bounded in $\Ll^\infty(\R_+;\Ld^\infty(\R^2\times\R^2))$. Then, after multiplication of the Vlasov equation by $|v|^2$ and suitable integration by parts, one gets 
\begin{equation}
M_2 f_n(t) + 2 \int_0^t M_2 f_n(s) \dd s \leq M_2 f_0 +  2\int_0^t \int_{\R^2} |u_n| m_1 f_n(s,x)\,\dd x \,\dd s. \label{ineq:M2fn} 
\end{equation} Let us recall the following standard interpolation estimate (see e.g. \cite{bou-des-grand-mou} for a proof):
\begin{equation}
\|m_\ell h \|_{\frac{k+2}{\ell+2}} \lesssim \|h\|_\infty^{\frac{k-\ell}{k+2}} M_k h(t)^{\frac{\ell+2}{k+2}}, \quad \forall \ell, k \in \N \textrm{ with } \ell  \leq k,
\label{ineq:interp}
\end{equation} for any $h:\R_+ \times \R^2 \times \R^2 \rightarrow \R_+$. In particular, taking $\ell=1$ and $k=2$ in (\ref{ineq:interp}) with the choice $h=f_n$, we infer the following inequality on any fixed interval $[0,T]$ 
\begin{align*}
\|m_1 f_n(t)\|_{4/3} \lesssim M_2 f_n(t)^{3/4},
\end{align*}
as $(f_n)_n$ is bounded in $\Ll^\infty(\R_+;\Ld^\infty(\R^2\times\R^2))$. Since $(u_n)_n$ is bounded in $\Ll^2(\R_+;\H^1(\R^2))$, it is bounded in $\Ll^2(\R_+;\Ld^p(\R^2))$ for all finite values of $p$ by Sobolev embedding: the last estimate can be employed in \eqref{ineq:M2fn} together with the Gronwall lemma to prove that $(M_2 f_n)_n$ is bounded in $\Ll^\infty(\R_+)$. Thus, the strong convergence of $(f_n)_n$ toward $f$ and Fatou's lemma allow to prove that $(t,x,v)\mapsto |v|^2 f \in \Ll^\infty(\R_+;\Ld^1(\R^2\times\R^2))$. This stronger integrability allows to extend further the weak formulation. In particular, using $|v|^2$ as a test function by an approximation argument, we deduce (\ref{eq:M2}).

\end{proof}

We prove next that $f$ satisfies even stronger moment estimates, thanks to the extra assumption \eqref{assu:lp}.

\begin{propo}\label{prop:lp}
Consider $(u_0,f_0)$ an admissible initial datum in the sense of Definition \ref{def:adm} and let $(u,f)$ be a solution of the corresponding Vlasov-Navier-Stokes system in the sense of Definition \ref{def:sol}. Assume furthermore  that $u$ satisfies 
\begin{equation}
u\in\Ll^1(\R_+;\Ld^\infty(\R^2)).
\label{eq:champdevitessesLINFINIenEspace}
\end{equation} Then the following estimate holds for $t\in [0,T]$ 
\begin{align}
\label{ineq:momlp}\|m_0 f(t)\|_\infty + \|m_1 f(t)\|_\infty + \|m_2 f(t)\|_\infty \leq e^{2T}\sup_{[0,T]} \|\Gamma_R(f_0)(t,\cdot)\|_{\Ld^1(\R^2)},
\end{align}
where $\Gamma_R(f_0)$ is the function defined in \eqref{assu:lp} and $R=  e^T\|u\|_{\Ld^1(0,T;\Ld^\infty(\R^2))}$.
\end{propo}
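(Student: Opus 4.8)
The plan is to deduce the stated bounds from the method of characteristics applied to the renormalised Vlasov equation \eqref{eq:renormalizedVlasov}, and then to integrate in the velocity variable. Fix $t\in[0,T]$ and a point $(x,v)$, and let $s\mapsto(X(s),V(s))$ be the characteristic curve defined by $\dot X(s)=V(s)$ and $\dot V(s)=u(s,X(s))-V(s)$ with terminal condition $(X(t),V(t))=(x,v)$. Along this curve the equation \eqref{eq:renormalizedVlasov} becomes $\tfrac{\dd}{\dd s}f(s,X(s),V(s))=2\,f(s,X(s),V(s))$ (the $+2$ being exactly the $(x,v)$-divergence of the drag field $-v$), and hence the representation formula
\[
f(t,x,v)=e^{2t}\,f_0\big(X(0),V(0)\big).
\]

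First I would turn this into a pointwise bound. Writing the velocity equation as $\tfrac{\dd}{\dd s}\big(e^{s}V(s)\big)=e^{s}u(s,X(s))$ and integrating over $[0,t]$ yields $V(0)=e^{t}v-\int_0^{t}e^{\tau}u(\tau,X(\tau))\,\dd\tau$, so that, using assumption \eqref{eq:champdevitessesLINFINIenEspace},
\[
|V(0)-e^{t}v|\le\int_0^{t}e^{\tau}\|u(\tau)\|_\infty\,\dd\tau\le e^{T}\|u\|_{\Ld^1(0,T;\Ld^\infty(\R^2))}=R.
\]
Thus $V(0)\in\B(e^{t}v,R)$, and since $X(0)\in\R^2$ the representation formula gives the key estimate $f(t,x,v)\le e^{2t}\sup_{\C^R_{t,v}}f_0\le e^{2T}\sup_{\C^R_{t,v}}f_0$ for every $t\in[0,T]$. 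The moment bounds then follow at once: since $|v|^k\le 1+|v|^2$ for $k\in\{0,1,2\}$, integrating the pointwise estimate against $|v|^k\,\dd v$ gives, uniformly in $x$,
\[
m_k f(t,x)\le e^{2T}\int_{\R^2}(1+|v|^2)\sup_{\C^R_{t,v}}f_0\,\dd v=e^{2T}\|\Gamma_R(f_0)(t,\cdot)\|_{\Ld^1(\R^2)},
\]
and taking the supremum in $x$ and then in $t\in[0,T]$ produces \eqref{ineq:momlp}.

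The delicate point — and the one I expect to be the main obstacle — is that $f$ is only a renormalised solution, so the characteristic representation above is not directly licit. I would therefore run the whole argument on the regularised problems already used in the proof of Lemma \ref{lem:interp}: mollify $u$ into a smooth field $u_n$ and approximate $f_0$ by compactly supported data $f_{0,n}$, producing classical solutions $f_n$ for which the computation above gives $f_n(t,x,v)\le e^{2t}\sup_{\C^{R_n}_{t,v}}f_{0,n}$. Here it is important that spatial mollification does not increase the $\Ld^\infty$ norm, so the corresponding radius $R_n=e^{T}\|u_n\|_{\Ld^1(0,T;\Ld^\infty(\R^2))}$ stays $\le R$. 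The two steps that require care are then the passage to the limit in this inequality — using the strong convergence $f_n\to f$ in $\mathscr{C}^0([0,T];\Ld^p(\R^2\times\R^2))$ to recover the left-hand side almost everywhere — and the convergence of the suprema $\sup_{\C^{R_n}_{t,v}}f_{0,n}$ to $\sup_{\C^R_{t,v}}f_0$, which forces one to keep track of how mollification enlarges the velocity ball and to use the continuity of $R\mapsto\Gamma_R(f_0)$. Once the pointwise bound is secured for $f$ itself, the integration in $v$ carried out above closes the argument.
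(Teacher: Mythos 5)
Your proposal is correct and follows essentially the same route as the paper: regularize $u$ (without increasing its $\Ld^1(0,T;\Ld^\infty)$ norm), use the explicit characteristics formula $f(t,x,v)=e^{2t}f_0(\X(0;t,x,v),\V(0;t,x,v))$, show $|e^t v-\V(0;t,x,v)|\leq e^T\|u\|_{\Ld^1(0,T;\Ld^\infty(\R^2))}=R$ so that the backward point lies in $\C^R_{t,v}$, and integrate the resulting pointwise bound in $v$. The only difference is cosmetic: the paper dispatches the approximation step in one line by invoking the strong stability of renormalized solutions (keeping $f_0$ fixed), whereas you spell out the passage to the limit in more detail, which is a harmless elaboration rather than a different argument.
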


\begin{rem}
The additional control on the moments of $f$ given by (\ref{ineq:momlp}) is crucial in our proof of uniqueness. On the other hand, it relies on the \emph{a priori} estimate (\ref{eq:champdevitessesLINFINIenEspace}), which seems to require extra regularity on velocity field $u$. However, this does not represent any limitation, as any weak solution of the Navier-Stokes system under a sufficiently regular force satisfies (\ref{eq:champdevitessesLINFINIenEspace}). We prove this in detail in Lemma \ref{raduc}.
\end{rem}

 \begin{proof}
Owing to the (strong) stability of renormalized solutions, we can without loss of generality assume that $u$ is smooth: it suffices to approach it by an approximating sequence $(u_n)_n$ such that $\|u_n\|_{\Ld^1(0,T;\Ld^\infty(\R^2))} \leq \|u\|_{\Ld^1(0,T;\Ld^\infty(\R^2))}$. In particular,  $f$ is known explictely through the characteristics curves. More precisely, we have the formula:
\begin{equation}
 \label{eq:char}
f(t,x,v)=e^{2t}f_0(\X(0;t,x,v), \V(0;t,x,v)),\end{equation}
where the characteristics $(\X, \V)$ solve (here the dot means derivative along the first variable)
\begin{empheq}[left =\empheqlbrace]{align}
\label{eq:char1}&\dot{\X}(s;t,x,v) = \V(s;t,x,v),\\
& \dot{\V}(s;t,x,v) = u(s,\X(s;t,x,v))-\V(s;t,x,v),  \label{eq:char2}\\
&\X(t;t,x,v)=x,\\
\label{eq:char4}&\V(t;t,x,v)=v.
\end{empheq}
We have in particular for $0\leq s\leq t\leq T$, 
\begin{align*}
|e^t v -e^s \V(s;t,x,v)|\leq \int_s^t e^{\tau} |u(\tau,\X(\tau;t,x,v)| \dd \tau \leq e^T  \|u\|_{\Ld^1(0,T;\Ld^\infty(\R^2))},
\end{align*}
so that $(X(0;t,x,v),\V(0;t,x,v)) \in \C^R_{t,v}:=\R^2\times \B(e^t v,R)$, where we have set $R:=e^T \|u\|_{\Ld^1(0,T;\Ld^\infty(\R^2))}$. 
Using \eqref{eq:char}, the conclusion follows immediately.
\end{proof}

\subsubsection{Energy equality}
In 2D, the incompressible Navier-Stokes system is well-posed if the source term is in $\Ll^2(\R_+;\H^{-1}(\R^2))$ (see for instance Theorem V.1.4 of \cite{boyer-fabrie}) and the corresponding Leray solution is known to satisfy the energy equality. In our coupling the source term of the fluid equation is $j_f - \rho_f u$. By duality and Sobolev embedding, $\H^{-1}(\R^2)$ contains all $\Ld^p(\R^2)$ (for finite values of $p$) so that using Lemma \ref{lem:interp} and (\ref{assu:lp}), one recovers that $j_f-\rho_f u$ belongs to $\Ll^2(\R_+;\H^{-1}(\R^2))$. Thus, our solution $u$ is in fact the Leray solution, which therefore belongs to $\mathscr{C}^0(\R_+;\Ld^2(\R^2))$ and satisfies the following identity 
\begin{align*}
\frac{1}{2}\|u(t)\|_2^2 + \int_0^t \|\nabla u(s)\|_2^2 \,\dd s = \frac{1}{2}\|u_0\|_2^2 + \int_0^t \int_{\R^2} u\cdot (j_f- \rho_f u)(s,x)\,\dd x\,\dd s.
\end{align*}
Adding the previous equality to $\frac{1}{2}\times$\eqref{eq:M2} leads to the following important energy-dissipation identity 
\begin{lem}
\label{energy}
Any weak solution $(u,f)$ to the 2D Vlasov-Navier-Stokes system satisfies that, for all $t\in\R_+$, one has 
\begin{equation*}
 \frac{1}{2}\| u(t)\|_2^2 + \frac{1}{2}M_2 f(t) + \int_0^t  \| \nabla u(s) \|_2^2 \,\dd s +   \int_0^t \int_{\R^2}\int_{\R^2}  f |v-u|^2 \,\dd v\,\dd x \, \dd s  =  \frac{1}{2}\|u_0\|_{2}^2  + \frac{1}{2} M_2 f_0.
\end{equation*}
\end{lem}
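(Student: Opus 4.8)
The plan is to produce the identity by superposing two balance laws that are essentially already available: the energy \emph{equality} for the fluid unknown $u$, and the second-moment identity \eqref{eq:M2} of Lemma \ref{lem:interp}. The only real content beyond bookkeeping is to recognize that the fluid--kinetic cross terms generated by this superposition reassemble \emph{exactly} into the kinetic dissipation $\int_0^t\int_{\R^2}\int_{\R^2} f|v-u|^2$.

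First I would secure that $u$ is the Leray solution and therefore obeys the energy equality (not merely the inequality). As established just before the statement, this rests on the fluid source $j_f-\rho_f u$ lying in $\Ll^2(\R_+;\H^{-1}(\R^2))$: the moment interpolation \eqref{ineq:interp} (taking $\ell=1$, $k=2$) together with the bound $M_2 f\in\Ll^\infty(\R_+)$ coming from Lemma \ref{lem:interp} gives $m_1 f=j_f\in\Ll^\infty(\R_+;\Ld^{4/3}(\R^2))$, while $\ell=0$, $k=2$ gives $\rho_f\in\Ll^\infty(\R_+;\Ld^2(\R^2))$; combined with $u\in\Ll^2(\R_+;\H^1)\subset\Ll^2(\R_+;\Ld^p)$ for all finite $p$ and the dual Sobolev embedding $\Ld^r(\R^2)\hookrightarrow\H^{-1}(\R^2)$ for $r>1$, both $j_f$ and $\rho_f u$ land in $\Ll^2_{\mathrm{loc}}(\R_+;\H^{-1})$. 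This grants, for every $t$,
\[
\tfrac12\|u(t)\|_2^2+\int_0^t\|\nabla u(s)\|_2^2\,\dd s=\tfrac12\|u_0\|_2^2+\int_0^t\int_{\R^2} u\cdot(j_f-\rho_f u)(s,x)\,\dd x\,\dd s .
\]

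Next I would add this to $\tfrac12\times$\eqref{eq:M2}, namely $\tfrac12 M_2 f(t)+\int_0^t M_2 f(s)\,\dd s=\tfrac12 M_2 f_0+\int_0^t\int_{\R^2} u\cdot j_f$. The data terms combine into $\tfrac12\|u_0\|_2^2+\tfrac12 M_2 f_0$, and the source terms total $2\int_0^t\int u\cdot j_f-\int_0^t\int\rho_f|u|^2$. It then remains to invoke the pointwise expansion $|v-u|^2=|v|^2-2u\cdot v+|u|^2$, which, since $\int f v\,\dd v=j_f$ and $\int f\,\dd v=\rho_f$, integrates to
\[
\int_{\R^2}\int_{\R^2} f|v-u|^2\,\dd v\,\dd x=M_2 f-2\int_{\R^2} u\cdot j_f\,\dd x+\int_{\R^2}\rho_f|u|^2\,\dd x .
\]
Consequently $\int_0^t M_2 f-\big(2\int_0^t\int u\cdot j_f-\int_0^t\int\rho_f|u|^2\big)$ equals precisely $\int_0^t\int\int f|v-u|^2$; substituting this into the summed identity and transferring the dissipation term to the left-hand side yields the claimed equality. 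I expect the one delicate point to be exactly this justification of the energy \emph{equality} rather than the inequality, i.e.\ the $\Ll^2_{\mathrm{loc}}(\R_+;\H^{-1})$ membership of the source and the finiteness of each cross integral $\int u\cdot j_f$, $\int\rho_f|u|^2$; once those integrability facts are in place, the rest is the elementary recombination above.
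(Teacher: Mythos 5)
Your proposal is correct and follows essentially the same route as the paper: check that the source $j_f-\rho_f u$ lies in $\Ll^2(\R_+;\H^{-1}(\R^2))$ so that $u$ coincides with the Leray solution and hence satisfies the energy equality, then add $\tfrac12\times$\eqref{eq:M2} and recombine the cross terms via $|v-u|^2=|v|^2-2u\cdot v+|u|^2$ into the kinetic dissipation. The only (harmless) difference is presentational: you spell out the $\H^{-1}$ membership through the interpolation bounds $j_f\in\Ll^\infty(\R_+;\Ld^{4/3}(\R^2))$ and $\rho_f\in\Ll^\infty(\R_+;\Ld^2(\R^2))$, which the paper asserts at this point and only details later in the proof of Lemma~\ref{raduc}.
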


\subsubsection{An estimate \emph{à la} Chemin-Gallagher}
In order to invoke Proposition \ref{prop:lp}, we need to prove (\ref{eq:champdevitessesLINFINIenEspace}). This is the purpose of the next lemma, inspired in the result ~\cite[Theorem 3]{ChemGal} due to Chemin and Gallagher.
\begin{lem}
\label{raduc}
Any global weak solution $(u,f)$ to the 2D Vlasov-Navier-Stokes system satisfies
\begin{equation}
u \in\Ll^{2}(\R_+;\Ld^\infty(\R^2)).
\label{eq:champdevitessesLINFINIplusL2enTEMPS}
\end{equation}
\end{lem}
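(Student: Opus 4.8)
The plan is to regard $u$ as the (unique) Leray solution of the 2D Navier--Stokes equation forced by $F:=j_f-\rho_f u$, exactly as was identified just before Lemma~\ref{energy}, and to reduce the statement to a borderline regularity estimate for that forced equation alone. The first task is to record that the force sits in a \emph{subcritical} space. Using the interpolation inequality \eqref{ineq:interp} with $\ell=1,\,k=2$ and the bound on $M_2 f$ from Lemma~\ref{lem:interp}, one gets $j_f=m_1 f\in\Ll^\infty(\R_+;\Ld^{4/3}(\R^2))$; similarly $\rho_f=m_0 f\in\Ll^\infty(\R_+;\Ld^2(\R^2))$, while Ladyzhenskaya's inequality together with Lemma~\ref{energy} gives $u\in\Ll^4(\R_+;\Ld^4(\R^2))$, hence $\rho_f u\in\Ll^4(\R_+;\Ld^{4/3}(\R^2))$. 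Consequently, on every bounded time interval,
\begin{equation*}
F=j_f-\rho_f u\in\Ll^2(\R_+;\Ld^{4/3}(\R^2))\hookrightarrow\Ll^2(\R_+;\H^{-1/2}(\R^2)),
\end{equation*}
so the Vlasov contribution to the force is strictly better than the energy-critical space $\H^{-1}$.

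Once the force is controlled, the statement becomes a purely fluid-dynamical assertion: the Leray solution of 2D Navier--Stokes with a force in $\Ll^2(\R_+;\H^{-1/2})$ belongs to $\Ll^2(\R_+;\Ld^\infty)$. I would prove this by adapting Chemin--Gallagher. The point is to measure how much smoothing can be extracted beyond the energy estimate $u\in\Ll^\infty(\R_+;\Ld^2)\cap\Ll^2(\R_+;\H^1)$. Writing the momentum equation in divergence form, incompressibility gives $(u\cdot\nabla)u=\div(u\otimes u)$ with $u\otimes u\in\Ll^2(\R_+;\Ld^2)$ (again by Ladyzhenskaya), so that $\div(u\otimes u)\in\Ll^2(\R_+;\H^{-1})$. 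Thus maximal parabolic regularity merely feeds back the energy regularity, and no naive bootstrap improves it: the target $\Ll^2(\R_+;\Ld^\infty)$ is scaling-critical, and this is precisely the borderline the Chemin--Gallagher analysis is designed to cross.

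Concretely, I would run the estimate in a Littlewood--Paley / Chemin--Lerner framework, localizing the equation at frequency $2^j$, exploiting the heat-flow decay $e^{-c2^{2j}(t-s)}$ on each dyadic block, and summing the pieces in an endpoint space such as $\widetilde{\Ld}^2(\R_+;\dot{\B}^0_{\infty,1}(\R^2))$, which embeds into $\Ll^2(\R_+;\Ld^\infty)$. The nonlinear and force contributions are handled blockwise through paraproduct decompositions, the gain over the plain energy estimate coming from a logarithmic (Osgood-type) refinement rather than from a power of the frequency. Equivalently, the same computation can be phrased as a logarithmic interpolation \emph{à la} Brezis--Gallouet plugged into a Gronwall/Osgood argument, with the $\Ll^2(\R_+;\H^1)$ bound playing the role of the leading term.

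The main obstacle is exactly this borderline summation. Because the quadratic nonlinearity lands precisely in $\Ll^2(\R_+;\H^{-1})$, any argument based on Sobolev embeddings or a finite bootstrap stalls at the energy class, and one must instead exploit the finer endpoint structure — the $\ell^1$ summation in frequency combined with square integrability in time — to recover the logarithmic excess needed to reach $\Ld^\infty$. The Vlasov force is comparatively benign, since it was shown above to be subcritical; the delicate point is carrying the endpoint estimate through in the presence of this extra $\Ll^2(\R_+;\H^{-1/2})$ source, which amounts to checking that it contributes at a strictly sub-borderline level in the same dyadic scheme.
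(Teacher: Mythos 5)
Your opening reduction is correct and in fact coincides with the paper's: using \eqref{ineq:interp} and Lemma \ref{energy} one gets $j_f\in\Ld^\infty(0,T;\Ld^{4/3}(\R^2))$ and $\rho_f\in\Ld^\infty(0,T;\Ld^2(\R^2))$ (these are \eqref{rho2}--\eqref{j43}), and your Ladyzhenskaya bound $u\in\Ld^4(0,T;\Ld^4)$ then places the Vlasov force in $\Ld^2(0,T;\Ld^{4/3})$. The gap is in what follows: the heart of the lemma --- that the forced 2D Leray solution lies in $\Ll^2(\R_+;\Ld^\infty(\R^2))$ --- is only a plan, not a proof, and the plan rests on a claim that is false. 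You assert that since $\div(u\otimes u)\in\Ll^2(\R_+;\H^{-1})$, ``no naive bootstrap improves it'' and an endpoint Littlewood--Paley/paraproduct scheme with a logarithmic gain is mandatory. In fact, on any bounded time interval, interpolating $u\in\Ld^\infty(0,T;\Ld^2)$ against $u\in\Ld^2(0,T;\Ld^p)$ (all finite $p$, by Sobolev embedding from $\nabla u\in\Ld^2\Ld^2$) gives $u\in\Ld^r(0,T;\Ld^p)$ for all finite $r,p$, whence $u\cdot\nabla u\in\Ld^{4/3}(0,T;\Ld^{4/3})$. Together with your force bounds this puts the \emph{entire} Duhamel source in $\Ld^{4/3}(0,T;\Ld^{4/3})$, and the paper then concludes linearly: write $u=S(t)u_0+w$, apply maximal $\Ld^{4/3}$ regularity to the Stokes problem for $w$ (so $D^2w\in\Ld^{4/3}(0,T;\Ld^{4/3})$), and use the Gagliardo--Nirenberg inequality $\|\ffi\|_\infty\lesssim\|\ffi\|_2^{1/3}\|D^2\ffi\|_{4/3}^{2/3}$ with $w\in\Ld^\infty(0,T;\Ld^2)$ to get $w\in\Ld^2(0,T;\Ld^\infty)$. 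The only genuinely borderline object is the free evolution $S(t)u_0$ with merely $u_0\in\Ld^2$, and that is a purely linear fact: the embedding $\Ld^2(\R^2)\hookrightarrow\B^{-1}_{\infty,2}(\R^2)$ plus the heat-flow characterization of this Besov space yield $\|S(t)u_0\|_{\Ld^2(0,T;\Ld^\infty)}\lesssim\|u_0\|_2$. This is the only place where the Chemin--Gallagher circle of ideas is invoked; no nonlinear endpoint summation is needed anywhere.

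Concretely, then, the missing step in your proposal is exactly the one you flag as ``the main obstacle'': carrying the borderline dyadic summation in $\widetilde{\Ld}^2(\dot{\B}^0_{\infty,1})$ through the full nonlinear problem with the extra source. Nothing in your text performs this estimate, and it is not a routine check --- it is the entire content of the Chemin--Gallagher theorem you would be re-proving in a forced setting. Your route is plausibly completable (the unforced case is their Theorem 3, and your observation that the Vlasov force is subcritical, $\Ld^2(0,T;\H^{-1/2})$, is correct and even slightly sharper than what the paper records), but as written the proposal establishes only the easy half. Note finally that $\Ld^{4/3}(0,T;\Ld^{4/3})$ is itself scaling-critical for the force; criticality is harmless in the paper's argument because maximal regularity is a linear estimate with no smallness requirement, and the Gagliardo--Nirenberg step borrows the energy norm --- which is why the bootstrap you dismissed actually closes.
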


\begin{rem}
Observe that we prove a stronger condition that (\ref{eq:champdevitessesLINFINIenEspace}). Indeed, (\ref{eq:champdevitessesLINFINIplusL2enTEMPS}) gives better integrability in time, which will be needed afterwards in the proof of the uniqueness result. 
\end{rem}

\begin{proof} Let $T>0$. Using (\ref{ineq:interp}) with $k=2$ and $\ell=0,1$ we infer for a.e. $t \geq 0$ and $x \in \R^2$,
\begin{align*}
|\rho_f(t,x) |  &\leq \| f \|_{\Ld^\infty(0,T; \Ld^\infty(\R^2 \times \R^2))}^{1/2} (M_2 f)^{1/2},\\
|j_f(t,x) |  &\leq \| f \|_{\Ld^\infty(0,T; \Ld^\infty(\R^2 \times \R^2))}^{1/4} (M_2 f)^{3/4},
\end{align*}
from which, combined with Lemma~\ref{energy}, we deduce 
\begin{align}
\label{rho2}
\rho_f &\in \Ld^\infty(0,T; \Ld^2(\R^2)),\\
\label{j43}
j_f &\in \Ld^\infty(0,T; \Ld^{4/3}(\R^2)).
\end{align}
We have also by Lemma~\ref{energy} that
$$
\nabla u \in  \Ld^2(0,T; \Ld^2(\R^2)),
$$
and thus by Sobolev embedding, we obtain that for all $p<+\infty$,
$$
u \in  \Ld^2(0,T; \Ld^p(\R^2)).
$$
Thanks to~\eqref{rho2}, we have that  for all $q<2$,
$$
\rho_f u \in  \Ld^2(0,T; \Ld^q(\R^2)).
$$
Furthermore, combining with the information
$$
u \in  \Ld^\infty(0,T; \Ld^2(\R^2)).
$$
obtained from Lemma~\ref{energy}, we can also get by interpolation that  for all $p,r<+\infty$
$$
u \in  \Ld^r(0,T; \Ld^p(\R^2)).
$$
We deduce that for all $p,r < 2$, 
$$
u\cdot \nabla u \in  \Ld^p(0,T; \Ld^r(\R^2)).
$$
Therefore, we can see the Navier-Stokes equation as a Stokes equation with a source $F$ given by
$$
F := j_f - \rho_f u  - u \cdot \nabla u 
$$
belonging to $ \Ld^{4/3}(0,T; \Ld^{4/3}(\R^2))$. We next write $u=S(t)u_0+w$, where $S(t)$ is the semi-group associated to the heat equation 
and $w\in \Ld^\infty(0,T;\Ld^2(\R^2))$ solves
$$
\partial_t w -\Delta w +\nabla p=F, \quad \div w = 0, \quad w(0)=0.
$$ 
We apply maximal estimates for the Stokes problem on $\R^2$ with zero initial value, which yield that 
$$
D^2 w \in  \Ld^{4/3}(0,T; \Ld^{4/3}(\R^2)).
$$
We use the Gagliardo-Nirenberg inequality stating that for any $\ffi\in\mathscr{D}(\R^2)$
\begin{align*}
\|\ffi\|_\infty \lesssim \|\ffi\|_2^{1/3} \| D^2\ffi\|_{4/3}^{2/3}, %+ \|\ffi\|_2,
\end{align*}
which gives 
$$
w \in {\Ld^2(0,T; \Ld^{\infty}(\R^2))}.
$$
On the other hand, for what concerns the contribution of the initial data, we argue as in~\cite[Theorem 3]{ChemGal} and obtain
$$\|S(t) u_0\|_{\Ld^2(0,T;\Ld^\infty(\R^2))}\lesssim  {\|u_0\|_{L^2}}.$$
(We have used the heat semi-flow characterization of the Besov space $\B^{-1}_{\infty, 2}(\R^2)$, see e.g. \cite[Theorem 5.3]{Lemarie} and the continuous 
embedding of $\Ld^2(\R^2)$ into $\B^{-1}_{\infty, 2}(\R^2)$, see e.g. \cite[Proposition 2.71]{BCD}.)
This implies the  claimed result.
\end{proof}
Gathering Lemma \ref{raduc} and Proposition \ref{prop:lp}, we infer:
\begin{coro}
\label{coro:lpeve2}
The conclusions of Proposition \ref{prop:lp}  hold
for any weak solution $(f,u)$ to the 2D Vlasov-Navier-Stokes system with admissible initial datum.
\end{coro}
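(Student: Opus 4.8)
The plan is simply to combine the two preceding results, since the only gap between them is a trivial integrability upgrade in the time variable. Recall that Proposition \ref{prop:lp} delivers the desired moment bound \eqref{ineq:momlp} under the \emph{a priori} hypothesis \eqref{eq:champdevitessesLINFINIenEspace}, namely $u\in\Ll^1(\R_+;\Ld^\infty(\R^2))$, whereas Lemma \ref{raduc} guarantees that every global weak solution satisfies the stronger property \eqref{eq:champdevitessesLINFINIplusL2enTEMPS}, namely $u\in\Ll^2(\R_+;\Ld^\infty(\R^2))$.

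First I would invoke Lemma \ref{raduc} to obtain $u\in\Ll^2(\R_+;\Ld^\infty(\R^2))$. Then, fixing any $T>0$ and applying the Cauchy-Schwarz inequality in the time variable on the bounded interval $[0,T]$, one has
\begin{align*}
\|u\|_{\Ld^1(0,T;\Ld^\infty(\R^2))} \leq T^{1/2}\, \|u\|_{\Ld^2(0,T;\Ld^\infty(\R^2))} < +\infty.
\end{align*}
Since $T>0$ is arbitrary and these are local-in-time spaces, this shows $u\in\Ll^1(\R_+;\Ld^\infty(\R^2))$, which is precisely the hypothesis \eqref{eq:champdevitessesLINFINIenEspace} demanded by Proposition \ref{prop:lp}.

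With \eqref{eq:champdevitessesLINFINIenEspace} now verified for an arbitrary weak solution with admissible initial datum, I would apply Proposition \ref{prop:lp} directly to conclude that the estimate \eqref{ineq:momlp} holds, the radius $R=e^T\|u\|_{\Ld^1(0,T;\Ld^\infty(\R^2))}$ being finite by the bound above. I do not anticipate any genuine obstacle here: all the analytic difficulty has already been absorbed into the proof of Lemma \ref{raduc} (the Chemin-Gallagher type argument passing through maximal regularity for the Stokes problem and a Gagliardo-Nirenberg inequality), and the present corollary merely records that its conclusion \eqref{eq:champdevitessesLINFINIplusL2enTEMPS} is strong enough to trigger Proposition \ref{prop:lp}.
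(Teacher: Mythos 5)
Your proposal is correct and follows exactly the paper's route: the corollary is stated immediately after the sentence ``Gathering Lemma \ref{raduc} and Proposition \ref{prop:lp}, we infer,'' i.e.\ the paper likewise deduces the hypothesis \eqref{eq:champdevitessesLINFINIenEspace} from the stronger conclusion \eqref{eq:champdevitessesLINFINIplusL2enTEMPS} of Lemma \ref{raduc} (the local-in-time H\"older/Cauchy--Schwarz upgrade you spell out is precisely the point of the remark following that lemma) and then applies Proposition \ref{prop:lp}. Nothing is missing.
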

\subsubsection{Local log-lipschitz regularity for $u$} 
As already explained, we will need for $u$ the regularity stated in Theorem~\ref{thm:flow}. We introduce the following (non-decreasing and concave) modulus of continuity  
\begin{equation}
\label{Psi}
\Psi:\tau\mapsto \tau |\log \tau|\quad \text{if }\tau\in [0,e^{-1}],\quad \Psi(\tau)=e^{-1}\quad \text{if }\tau\in [e^{-1},+\infty).
\end{equation}
Relying on Lemma \ref{raduc} and on Corollary \ref{coro:lpeve2}, we can prove the following lemma.
\begin{lem}
\label{raducbis}
Any global weak solution $(f,u)$ to the 2D Vlasov-Navier-Stokes system satisfies, for $|x-y|\leq 1/e$ 
\begin{align*}
|u(t,x)-u(t,y)| \leq \gamma(t) \Psi(|x-y|)
\end{align*}
for some function $\gamma\in\Ll^1(\R_+)$.
\end{lem}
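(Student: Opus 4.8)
The plan is to recognize the fluid equation of the Vlasov-Navier-Stokes system as a \emph{forced} Navier-Stokes equation with source $F := j_f - \rho_f u$, and then to invoke the Chemin-Lerner result (Theorem \ref{thm:flow}) with the choice $\eta = 1$. The entire difficulty thus reduces to verifying the single hypothesis of that theorem, namely $F \in \Ll^2(\R_+;\Ld^2(\R^2))$; once this is established, the desired modulus of continuity is immediate.

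First I would collect the integrability available for the three pieces $j_f$, $\rho_f$ and $u$. From Lemma \ref{energy} the velocity field satisfies $u \in \Ld^\infty(0,T;\Ld^2(\R^2))$ for every $T$. The decisive input is Corollary \ref{coro:lpeve2}, which upgrades the moment bounds to $\|m_0 f(t)\|_\infty, \|m_1 f(t)\|_\infty \leq C_T$; in particular $\rho_f \in \Ld^\infty(0,T;\Ld^\infty(\R^2))$ and, since $|j_f| \leq m_1 f$, also $j_f \in \Ld^\infty(0,T;\Ld^\infty(\R^2))$. Interpolating this $\Ld^\infty$ control against the $\Ld^{4/3}$ bound \eqref{j43} already at hand yields $j_f \in \Ld^\infty(0,T;\Ld^2(\R^2))$, while $\rho_f u$ is handled by a plain Hölder estimate $\|\rho_f u(t)\|_2 \leq \|\rho_f(t)\|_\infty \|u(t)\|_2$, giving $\rho_f u \in \Ld^\infty(0,T;\Ld^2(\R^2))$. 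Summing, $F = j_f - \rho_f u \in \Ld^\infty(0,T;\Ld^2(\R^2)) \subset \Ld^2(0,T;\Ld^2(\R^2))$ for every $T$, that is, $F \in \Ll^2(\R_+;\Ld^2(\R^2))$.

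With this source term in hand, and since (as observed in the derivation of Lemma \ref{energy}) our $u$ is precisely the Leray solution of $\partial_t u + (u\cdot\nabla)u - \Delta u + \nabla p = F$ with initial datum $u_0 \in \Ldiv^2(\R^2)$, Theorem \ref{thm:flow} applies. Choosing $\eta = 1$ gives, for $|x-y| \leq 1/e$,
\begin{align*}
|u(t,x) - u(t,y)| \leq \gamma_1(t)\,|x-y|\,\log |x-y|^{-1},
\end{align*}
for some $\gamma_1 \in \Ll^1(\R_+)$. It then only remains to match this bound with the modulus $\Psi$: for $\tau = |x-y| \leq 1/e$ one has $\log \tau^{-1} = |\log \tau|$, whence $\tau \log \tau^{-1} = \Psi(\tau)$ by the very definition \eqref{Psi}. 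Setting $\gamma := \gamma_1$ yields the claim.

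The main obstacle is the $\Ld^2$ control of the source $F$, and more precisely of $j_f$: the bare energy estimate only delivers $j_f \in \Ld^\infty(0,T;\Ld^{4/3})$, which is short of the $\Ld^2$ regularity demanded by Theorem \ref{thm:flow}. What closes the gap is the refined moment bound of Corollary \ref{coro:lpeve2}, itself a consequence of the admissibility condition \eqref{assu:lp}; this is exactly the point at which the decay hypothesis on $f_0$ is genuinely exploited.
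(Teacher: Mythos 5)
Your proposal is correct and follows essentially the same route as the paper: reduce to verifying $j_f-\rho_f u\in\Ld^2(0,T;\Ld^2(\R^2))$, get $\rho_f, j_f\in\Ld^\infty(0,T;\Ld^\infty(\R^2))$ from Corollary \ref{coro:lpeve2}, bound $\rho_f u$ by H\"older against $u\in\Ld^\infty(0,T;\Ld^2(\R^2))$, interpolate $j_f$ between $\Ld^\infty$ and the $\Ld^{4/3}$ bound \eqref{j43}, and conclude by Theorem \ref{thm:flow}. Your choice $\eta=1$ and the explicit matching with the modulus $\Psi$ is exactly the step the paper leaves implicit in ``the conclusion follows.''
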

\begin{proof}
In order to apply Theorem~\ref{thm:flow}, it suffices to establish that the source term
$j_f-\rho_f u$ belongs to $\Ld^2(0,T; \Ld^{2}(\R^2))$. 

By Corollary \ref{coro:lpeve2}, we deduce that $j_f$ and $\rho_f$ belong to $\Ld^\infty(0,T; \Ld^{\infty}(\R^2))$. Since $u$ belongs to  $\Ld^\infty(0,T; \Ld^{2}(\R^2))$, the quantity $\rho_f u$ belongs to  $\Ld^\infty(0,T; \Ld^{2}(\R^2))$. Furthermore, by~\eqref{j43}, $j_f$ belongs to $\Ld^\infty(0,T; \Ld^{4/3}(\R^2))$ so that  by interpolation, it belongs to  $\Ld^\infty(0,T; \Ld^{2}(\R^2))$. Finally, $j_f-\rho_f u$ belongs to $\Ld^\infty(0,T; \Ld^{2}(\R^2))$, and the conclusion follows.
\end{proof}
%\begin{rem}
%To avoid the use of Theorem~\ref{thm:flow}, one could also apply a boot-strap argument in the following way. Refining the proof of Lemma \ref{raduc}, we observe that $ u \cdot \nabla_x u$ belongs to $\Ld^p(0,T; \Ld^{4/3}(\R^2))$ for all $1<p<2$. Thus by maximal regularity, $D^2_x u$ belongs to $\Ld^p(0,T; \Ld^{4/3}(\R^2)$. Gagliardo-Nirenberg inequality implies that $u$  belongs to $\Ld^r(0,T; \Ld^{\infty}(\R^2))$ for all $r<3$. We infer that $u\cdot \nabla_x u$ belongs to $\Ld^p(0,T; \Ld^{2}(\R^2))$ for some $p>1$. On the other hand, as seen above, $j_f-\rho_f u$ belongs to $\Ld^p(0,T; \Ld^{2}(\R^2))$. Applying  maximal regularity for the heat equation, we deduce that $u$ belongs to  $\Ld^p(0,T; \H^{2}(\R^2))$. By Morrey's inequality, we conclude that $u$ satisfies the regularity of Lemma \ref{raducbis}.
%\end{rem}}

\subsection{The proof in four steps}
\label{sec: strategydetailed}

We can now proceed with the actual proof of Theorem \ref{thm:uni}. We do so by proving uniqueness on any interval $[0,T]$, in four steps. Step 1 identifies the key  term which appears when one simply tries to follow the usual energy estimate for the Navier-Stokes system (that is, the computation that leads to uniqueness of Leray solutions in 2D). Since this term cannot simply be controlled by the fluid energy or it dissipation, another functional has to be introduced to measure the distance between the two kinetic parts of the solutions. This functional $Q$ is introduced in Step 2 (it is the one used by Loeper in \cite{Loeper}). To avoid the ``rob Peter to pay Paul'' scenario, we check also in Step 2 that the evolution of $Q$ does not produce any extra terms that we cannot handle up to this point. This behavior is ensured up to a positive time $T_0$ which depends only on the $\Ld^2(0,T;\Ld^\infty(\R^2))$ norms of the two fluid solutions. In Step 3 we use the Hardy maximal function to justify the utility of $Q$: the key term identified in Step 1 can be controlled in terms of $Q$ and the fluid's energy and dissipation, on the whole interval $[0,T_0]$. Eventually in Step 4 we use a Gronwall-like argument to prove that uniqueness holds on $[0,T_0]$ and that it is propagated on the whole $[0,T]$.

\vspace{2mm} 

We consider thus $T>0$ and two solutions $(u_1,f_1)$ and $(u_2,f_2)$ of the Vlasov-Navier-Stokes system, in the sense of Definition \ref{def:sol} with an admissible initial datum $(u_0,f_0)$ in the sense of Definition \ref{def:adm}. In the proof, the notation $C$ will refer to a universal constant and the notation $K$ to a constant depending only on $T$ and on the norms $\|u_j\|_{\Ld^2(0,T;\H^1(\R^2))}$ and $\|u_j\|_{\Ld^2(0,T;\Ld^\infty(\R^2))}$ for $j=1,2$ (note that these norms are finite thanks to Lemma~\ref{energy} and Lemma~\ref{raduc}). The constants $C$ and $K$ may  possibly change value from one line to another. Similarly, we will denote by $\gamma$ an nonnegative element of $\Ld^1(\R_+)$ which satisfies a.e. 
\begin{align*}
\max_{k=1,2} \Big(\|u_k(t)\|_\infty + \|\nabla u_k(t)\|_2 + \gamma_k(t)\Big) \leq \gamma(t),
\end{align*}   
where $\gamma_k$ is the $\Ll^1(\R_+)$ element given by Lemma \ref{raducbis} for $u_k$. Again, $\gamma$ may change of value from line to line.
\vspace{2mm}

Before going on with the proof we notice that thanks to Corollary \ref{coro:lpeve2}, using 
$$R=  e^T(\|u_1\|_{\Ld^1(0,T;\Ld^\infty(\R^2))}+\|u_2\|_{\Ld^1(0,T;\Ld^\infty(\R^2))}),$$
we have the following crucial estimate for $k=1,2$ for $t\in[0,T]$:
\begin{align}\label{ineq:crucial}
\|m_0 f_k(t)\|_\infty + \|m_1 f_k(t)\|_\infty + \|m_2 f_k(t)\|_\infty \leq K.
\end{align}

\subsubsection{Step 1: $\Ld^2$ energy estimate for the Navier-Stokes equation}%the bad term}

%\begin{proof}[Proof of Theorem \ref{thm:uni}] 

Since $(u_1,f_1)$ and $(u_2,f_2)$ are two solutions, %, starting from an admissible initial data $(u_0,f_0)$ with $f_0$ compactly supported, then 
$w:=u_1-u_2$ solves
\begin{align}
\label{eq:w}  \partial_t w + (u_1 \cdot \nabla) w - \Delta w + \nabla p = - (w \cdot \nabla) u_2 - \rho_1 w + (\rho_2-\rho_1) u_2 + j_1 - j_2 ,
\end{align}
where we denote for $k=1,2$, $(\rho_k,j_k) := (\rho_{f_k}, j_{f_k})$.

\medskip

Owing to the regularity of the Leray solutions, we can take $w$ as test function in \eqref{eq:w} and recover, after the usual integration by parts for the Navier-Stokes system (note that $w(0)=0$) for all $t\in[0,T]$
\begin{multline}
\label{eq:energy} \frac{1}{2}\|w(t)\|_2^2+\int_0^t \|\nabla w(s)\|_2^2\,\dd s\\
= -\int_0^t \int_{\R^2}w \cdot (w\cdot \nabla) u_2 \,\dd x\, \dd s -\int_0^t \int_{\R^2}\rho_1 |w|^2 \,\dd x\dd s+ \int_0^t A(s)\,\dd s,
\end{multline}
where we have set
\begin{align}
\label{def-A}
A(s):=\int_{\R^2} (\rho_2-\rho_1) u_2\cdot w \,dx + \int_{\R^2} (j_1-j_2)\cdot w \,\dd x
\end{align}

We can easily handle the first two terms of the r.h.s. in \eqref{eq:energy}. Indeed the first one is in fact  present even without the coupling with the kinetic part of the system, and is handled in a classical way thanks to the Gagliardo-Nirenberg inequality stating that for any $\ffi\in\mathscr{D}(\R^2)$
\begin{align}
\label{ineq:gngen} \|\ffi\|_q \lesssim \|\ffi\|_2^{2/q} \|\ffi\|_{\H^1(\R^2)}^{1-2/q},
\end{align}
which gives for $q=4$
\begin{align}
\label{ineq:gn} \|\ffi\|_4 \lesssim \|\ffi\|_2^{1/2} \| \ffi\|_{\H^1(\R^2)}^{1/2},
\end{align}
so that
\begin{align*}
\| w \cdot (w\cdot \nabla) u_2 \|_1 &\leq \|w\|_4^2 \|\nabla u_2\|_2 \\
&\lesssim  \|w\|_2 \|w\|_{\H^1(\R^2)} \|\nabla u_2\|_2 \\
&\leq C\|w\|_2^2 (\|\nabla u_2\|_2^2+1) + \frac{1}{4} \|w\|_{\H^1(\R^2)}^2.
\end{align*}
We observe here that the worst term $\|\nabla w\|_2^2$ is absorbed by the l.h.s. in \eqref{eq:energy}, while the other opens the way for an eventual use of a Gronwall type estimate because both  $u_k$ belong to $\Ll^2(\R_+;\H^1(\R^2))$. For what concerns the second term in the r.h.s. of \eqref{eq:energy}, it is in fact a gain in our computation because $\rho_1\geq 0$, so that the core of the proof is devoted to the understanding of $A(s)$, since we actually obtained 
\begin{align}\label{ineq:w:gron}
\frac{1}{2}\|w(t)\|_2^2 + \frac{3}{4}\int_0^t \|\nabla w(s)\|_2^2\,\dd s \leq C\int_0^t \gamma(s)\|w(s)\|_2^2 \dd s + \int_0^t A(s) \,\dd s,
\end{align}
for some $\gamma\in\Ll^1(\R_+)$.

\subsubsection{Step 2: Wasserstein-like estimate for the Vlasov equation}

%\emph{Step 2: a Wasserstein-like distance.}
To handle the term $A(t)$, we follow the approach of Loeper \cite{Loeper} and introduce a functional $Q$ inspired from optimal transport to measure the distance between $f_1$ and $f_2$. %First, for $k=1,2$, by Corollary \ref{coro:lpeve2}, we recall that $j_k-\rho_k u_k \in\Ll^1(0,T;\Ld^\infty(\R^2))$. Therefore using Theorem \ref{thm:flow} 
By Lemma~\ref{raducbis}, we observe that both $u_k$ have a sufficient Osgood regularity so as to define the characteristics curves $\Z_k^t(x,v):=(\X_k^t(x,v),\V_k^t(x,v))$ associated to the vector field $(t,x,v) \mapsto (v,u_k(t,x)-v)$ (as defined in \eqref{eq:char1} -- \eqref{eq:char4}), where we have set 
$\X_k^t(x,v)=\X_k(t;0,x,v)$ and $\V_k^t(x,v)=\V_k(t;0,x,v)$ for simplicity. We therefore have the following explicit formula for $k=1,2$:
\begin{align}\label{eq:charfk}
f_k(t,\Z_k^t(x,v)) =e^{2t} f_0(x,v).
\end{align}
Following Loeper, one defines then
\begin{equation}
Q(t) := \int_{\R^2}\int_{\R^2} f_0(x,v) |\Z^t_1(x,v)-\Z^t_2(x,v)|^2 \dd v\,\dd x.
\label{eq:Loeper}
\end{equation}

In order not to pollute the estimates with terms that we cannot handle we should verify that the evolution of $Q$ is well-behaved. This will be true locally on an interval which is given by the following lemma (recall that $u_1,u_2\in\Ll^2(\R_+;\Ld^\infty(\R^2))$ by Lemma \ref{raduc}) 
\begin{lem}\label{lem:trajec-bis}
There exists $T_0\in [0,T]$, depending  only on $\|u_k\|_{\Ld^2(0,T;\Ld^\infty(\R^2))}$ for $k=1,2$,  such that for any $t\in [0,T_0]$ 
\begin{align}
\label{ineq:Z}
 \sup_{(t,x,v)\in [0,T_0]\times \R^2\times \mathbb{R}^2}|\Z_1^t(x,v)-\Z_2^t(x,v)|< \frac{e^{-1}}{2}\text{ and } \sup_{[0,T_0]} Q< \frac{e^{-1}}{2}.
\end{align}
 \end{lem}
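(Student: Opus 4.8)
The plan is to control the separation of the two families of characteristics \emph{pointwise} in $(x,v)$ by a local-in-time Gronwall argument, and then to read off the bound on $Q$ from the mass normalization \eqref{ass:f0norm}. First I would set $\delta\Z^t(x,v):=\Z_1^t(x,v)-\Z_2^t(x,v)$, with components $\delta\X^t=\X_1^t-\X_2^t$ and $\delta\V^t=\V_1^t-\V_2^t$. Since both flows start from $(x,v)$ at time $0$, one has $\delta\Z^0=0$. Differentiating the characteristic system \eqref{eq:char1}--\eqref{eq:char4} for $k=1,2$ and subtracting gives, for a.e.\ $t$,
\begin{align*}
\dot{\delta\X}^t &= \delta\V^t,\\
\dot{\delta\V}^t &= u_1(t,\X_1^t)-u_2(t,\X_2^t)-\delta\V^t.
\end{align*}

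Next I would derive a differential inequality for the (absolutely continuous) scalar quantity $e(t):=|\delta\X^t|+|\delta\V^t|$, which is comparable to $|\delta\Z^t|$ (indeed $|\delta\Z^t|\le e(t)\le \sqrt{2}\,|\delta\Z^t|$). The point is that \emph{no} fine (log-Lipschitz) information on $u_k$ is needed here: it suffices to bound the forcing crudely and uniformly in $(x,v)$ by $|u_1(t,\X_1^t)-u_2(t,\X_2^t)|\le \|u_1(t)\|_\infty+\|u_2(t)\|_\infty=:\psi(t)$. Using $\tfrac{\dd}{\dd t}|a|\le |\dot a|$, the two equations yield $e'(t)\le 2e(t)+\psi(t)$ with $e(0)=0$, and by Lemma~\ref{raduc} one has $\psi\in\Ld^2(0,T)$.

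The crucial (and only subtle) step is to extract a smallness window $T_0$ depending \emph{only} on the norms $\|u_k\|_{\Ld^2(0,T;\Ld^\infty(\R^2))}$. Gronwall's lemma gives, for $t\le T_0\le\min(1,T)$,
\begin{align*}
e(t)\le \int_0^t e^{2(t-s)}\psi(s)\,\dd s \le e^2\int_0^t \psi(s)\,\dd s.
\end{align*}
A naive estimate of $\int_0^t\psi$ would make $T_0$ depend on the profile of $\psi$ near $0$ rather than on its norm; to avoid this I would apply Cauchy--Schwarz in time, $\int_0^t\psi(s)\,\dd s\le \sqrt{t}\,\|\psi\|_{\Ld^2(0,T)}\le \sqrt{t}\,\big(\|u_1\|_{\Ld^2(0,T;\Ld^\infty)}+\|u_2\|_{\Ld^2(0,T;\Ld^\infty)}\big)$. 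Hence $\sup_{(x,v)}|\delta\Z^t|\le e^2\sqrt{t}\,\big(\|u_1\|_{\Ld^2(0,T;\Ld^\infty)}+\|u_2\|_{\Ld^2(0,T;\Ld^\infty)}\big)$ for $t\le\min(1,T)$, and it suffices to choose $T_0=\min(1,T,\tau)$, with $\tau$ small enough depending only on these two norms, so that the right-hand side is $<e^{-1}/2$. Since this bound is increasing in $t$, this yields the first inequality in \eqref{ineq:Z} on all of $[0,T_0]$. The main obstacle is precisely this dependence-on-norms-only requirement, resolved by the Cauchy--Schwarz-in-time trick.

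Finally, the bound on $Q$ is immediate from the previous one together with the normalization $\int_{\R^2}\int_{\R^2} f_0=1$:
\begin{align*}
Q(t)=\int_{\R^2}\int_{\R^2} f_0(x,v)\,|\Z_1^t-\Z_2^t|^2\,\dd v\,\dd x\le \sup_{(x,v)}|\Z_1^t(x,v)-\Z_2^t(x,v)|^2<\Big(\frac{e^{-1}}{2}\Big)^2<\frac{e^{-1}}{2},
\end{align*}
for all $t\in[0,T_0]$, which is the second inequality in \eqref{ineq:Z} and completes the proof.
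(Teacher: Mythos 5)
Your proposal is correct and follows essentially the same route as the paper: a pointwise (uniform in $(x,v)$) bound on the trajectory separation obtained from $\|u_1(t)\|_\infty+\|u_2(t)\|_\infty$, made quantitative in $T_0$ via Cauchy--Schwarz in time against the $\Ld^2(0,T;\Ld^\infty(\R^2))$ norms, after which the bound on $Q$ follows from the normalization \eqref{ass:f0norm}. The only (harmless) difference is that the paper integrates the velocity equation exactly via Duhamel's formula, using $e^{-(t-\tau)}\leq 1$ to avoid any Gronwall factor, whereas you use a crude Gronwall inequality on $|\delta\X^t|+|\delta\V^t|$, which costs the factor $e^{2}$ and the restriction $T_0\leq 1$.
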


\begin{rem}\label{rem:normalized}
Without the normalization assumption \eqref{ass:f0norm}, $T_0$ would also depend on $\|f_0\|_1$.
\end{rem}

\begin{proof}
From the definition of the characteristics, we first infer 
\begin{align*}
\V_k^t(x,v)  = v e^{-t}  + \int_0^t e^{-(t-\tau)} u_k(\tau,\X_k^\tau(x,v))\dd \tau,
\end{align*}
from which we have 
\begin{align*}
 |\V_1^t(x,v) - \V_2^t(x,v)|& \leq  \int_0^t( \|u_1(\tau)\|_{\infty}  +  \|u_2(\tau)\|_{\infty}) \,\dd \tau \\
&\leq t^{1/2}(\|u_1\|_{{\Ld^2(0,T;\Ld^\infty(\R^2))}}+\|u_2\|_{\Ld^2(0,T;\Ld^\infty(\R^2))})
\end{align*}
which leads after direct integration to 
\begin{align*}
 |\X_1^t(x,v) - \X_2^t(x,v)| \lesssim    t^{3/2}(\|u_1\|_{\Ld^2(0,T;\Ld^\infty(\R^2))}+\|u_2\|_{\Ld^2(0,T;\Ld^\infty(\R^2))}).
\end{align*}
Choosing $T_0$ sufficiently small, we get the first estimate and also the second one independently of $f_0$ (recall the normalized assumption \eqref{ass:f0norm} on $f_0$).  
\end{proof}

The dynamics of $Q$ is then controlled thanks to the following lemma 
\begin{lem}\label{lem:dyna-Q}
Let $T_0>0$ given by Lemma \ref{lem:trajec-bis}. There exists some $K>0$ and $\gamma\in\Ll^1(\R_+)$ such that for any $t\in[0,T_0]$ there holds
\begin{align*}
Q'(t) \leq 2Q(t)  + \gamma(t) \Psi(Q(t)) + K \|w(t)\|_2^2,
\end{align*}
where $\Psi$ is defined in \eqref{Psi}.
\end{lem}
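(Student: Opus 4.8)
The plan is to differentiate $Q$ directly along the characteristics and to estimate the resulting terms one by one, working on $[0,T_0]$ so that Lemma~\ref{lem:trajec-bis} keeps every relevant quantity below $e^{-1}$. Writing for brevity $\delta\Z=\Z_1^t-\Z_2^t$, $\delta\X=\X_1^t-\X_2^t$ and $\delta\V=\V_1^t-\V_2^t$, and using the characteristic system \eqref{eq:char1}--\eqref{eq:char4}, differentiation under the integral sign (licit because the Osgood regularity of Lemma~\ref{raducbis} yields a well-defined $C^1$ flow) gives
\begin{align*}
Q'(t) = 2\int_{\R^2}\int_{\R^2} f_0\,\Big[\, \delta\X\cdot\delta\V + \delta\V\cdot\big(u_1(t,\X_1^t)-u_2(t,\X_2^t)\big) - |\delta\V|^2\,\Big]\,\dd v\,\dd x.
\end{align*}
The last term enters with a minus sign and is discarded; the first is controlled by Young's inequality, $2\,\delta\X\cdot\delta\V\leq |\delta\X|^2+|\delta\V|^2=|\delta\Z|^2$, which integrates to at most $Q(t)$. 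Everything then reduces to the force term, which I split as $u_1(t,\X_1^t)-u_2(t,\X_2^t)=[u_1(t,\X_1^t)-u_1(t,\X_2^t)]+w(t,\X_2^t)$.

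For the first bracket I invoke the log-Lipschitz bound of Lemma~\ref{raducbis}: since $|\delta\X|\leq|\delta\Z|<e^{-1}/2\leq 1/e$ on $[0,T_0]$, one has $|u_1(t,\X_1^t)-u_1(t,\X_2^t)|\leq \gamma_1(t)\Psi(|\delta\X|)$. Bounding $|\delta\V|\,\Psi(|\delta\X|)\leq |\delta\Z|\,\Psi(|\delta\Z|)$ and using the elementary identity $a\,\Psi(a)=\tfrac12\Psi(a^2)$ valid for $a\leq e^{-1}$, the concavity of $\Psi$ together with Jensen's inequality (here the normalization \eqref{ass:f0norm} makes $f_0\,\dd v\,\dd x$ a probability measure) gives
\begin{align*}
2\gamma_1(t)\int_{\R^2}\int_{\R^2} f_0\,|\delta\V|\,\Psi(|\delta\X|)\,\dd v\,\dd x \leq \gamma_1(t)\int_{\R^2}\int_{\R^2} f_0\,\Psi(|\delta\Z|^2)\,\dd v\,\dd x \leq \gamma_1(t)\,\Psi(Q(t)),
\end{align*}
which is absorbed into the $\gamma(t)\Psi(Q(t))$ term.

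The second bracket is the crux. By Young's inequality, $2\,\delta\V\cdot w(t,\X_2^t)\leq |\delta\V|^2+|w(t,\X_2^t)|^2$, and the $|\delta\V|^2$ piece contributes another $Q(t)$. For the remaining term I perform the change of variables $(y,\xi)=\Z_2^t(x,v)$: the phase-space field $(v,u_2(t,x)-v)$ has divergence $-2$, so Liouville's theorem gives $\dd x\,\dd v=e^{2t}\,\dd y\,\dd\xi$, while \eqref{eq:charfk} reads $f_0(x,v)=e^{-2t}f_2(t,y,\xi)$. Hence
\begin{align*}
\int_{\R^2}\int_{\R^2} f_0\,|w(t,\X_2^t)|^2\,\dd v\,\dd x = \int_{\R^2}\int_{\R^2} f_2(t,y,\xi)\,|w(t,y)|^2\,\dd y\,\dd\xi = \int_{\R^2}\rho_2(t,y)\,|w(t,y)|^2\,\dd y \leq \|\rho_2(t)\|_\infty\,\|w(t)\|_2^2,
\end{align*}
with $\|\rho_2(t)\|_\infty\leq K$ by \eqref{ineq:crucial}. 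Collecting the contributions $Q(t)$, $\gamma(t)\Psi(Q(t))$ and $Q(t)+K\|w(t)\|_2^2$ yields the claimed inequality.

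The main obstacle is this last change of variables: it is what converts the \emph{Lagrangian} quantity $\int f_0\,|w(t,\X_2^t)|^2$ into the \emph{Eulerian} $\int\rho_2|w|^2$, and it is essential that the flow of the drag-corrected field be measure-preserving up to the explicit factor $e^{-2t}$, matching exactly the factor in \eqref{eq:charfk}. The second delicate point is the concavity argument: one must land precisely on $\Psi(Q)$ and not on a larger quantity such as $Q(\log Q)^2$, which is exactly what the identity $a\,\Psi(a)=\tfrac12\Psi(a^2)$ and Jensen's inequality deliver, and which in turn is what will make the subsequent Osgood--Gronwall argument of Step~4 closeable.
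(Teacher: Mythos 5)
Your proof is correct and follows essentially the same route as the paper's: the same differentiation of $Q$ along characteristics, the same splitting $u_1(\X_1^t)-u_2(\X_2^t)=[u_1(\X_1^t)-u_1(\X_2^t)]+w(\X_2^t)$, the log-Lipschitz bound of Lemma~\ref{raducbis} combined with Jensen's inequality and the concavity of $\Psi$ (your exact identity $a\Psi(a)=\tfrac12\Psi(a^2)$ on $[0,e^{-1}]$ is a slightly sharper form of the paper's inequality $\tau\Psi(\tau)\leq\Psi(\tau^2)$), and the same Lagrangian-to-Eulerian change of variables via \eqref{eq:charfk} yielding $\|\rho_2(t)\|_\infty\|w(t)\|_2^2\leq K\|w(t)\|_2^2$ by \eqref{ineq:crucial}. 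The only quibble is your claim of a $C^1$ flow: the log-Lipschitz field gives only an absolutely continuous (in $t$) flow with derivative a.e., but this suffices for the differentiation of $Q$ and is exactly how the paper proceeds.
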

\begin{proof}
For the sake of clarity, we omit the variables $(x,v)$ when there is no ambiguity in the subsequent integrands. Using the definition of the characteristics curves we infer 
\begin{align*}
\frac{1}{2}Q'(t) &= \int_{\R^2} \int_{\R^2} f_0 (\X^t_1-\X^t_2) \cdot (\V^t_1-\V^t_2) \dd v\,\dd x \\
&+ \int_{\R^2} \int_{\R^2} f_0 (\V^t_1-\V^t_2)\cdot(u_1(\X_1^t)-u_2(\X_2^t))  \dd v\,\dd x\\
&- \int_{\R^2} \int_{\R^2} f_0 |\V^t_1-\V^t_2|^2  \dd v\,\dd x,
\end{align*}
so that using Young's inequality and the non-negativity of $f_0$ we have 
\begin{align}
\nonumber Q'(t) \leq2 Q(t) &+2 \int_{\R^2}\int_{\R^2} f_0{|\V^t_1-\V^t_2| |u_1(\X_1^t)-u_1(\X_2^t)|}  \dd v\,\dd x\\
\label{ineq:Q'}&+ \int_{\R^2}\int_{\R^2} f_0 |u_1(\X_2^t)-u_2(\X_2^t)|^2  \dd v\,\dd x.
\end{align}
Consider $\gamma_1\in\Ll^1(\R_+)$ given by Lemma \ref{raducbis} for $u_1$. We infer from the definition of $T_0$ in Lemma \ref{lem:trajec-bis} the following estimate on $[0,T_0]$:
$$|u_1(\X_1^t)-u_1(\X_2^t)|\leq \gamma_1(t)\Psi(|\X_1^t-\X_2^t|),$$
so that we have for the first integral in the r.h.s. of \eqref{ineq:Q'} ($\Psi$ is non-decreasing):
\begin{align*}
\int_{\R^2}\int_{\R^2} f_0|\V^t_1-\V^t_2| |u_1(\X_1^t)-u_1(\X_2^t)| \, \dd v\,\dd x
\leq \gamma_1(t)\int_{\R^2}\int_{\R^2} f_0|\Z^t_1-\Z^t_2| \Psi(|\Z_1^t-\Z_2^t|) \,\dd v\,\dd x.
\end{align*} 
But one checks easily that $\tau \Psi(\tau)\leq \Psi(\tau^2)$ for $\tau\in[0,e^{-1}]$ and since $\Psi$ is concave, we infer from Jensen's inequality (recall the normalization \eqref{ass:f0norm} of $f_0$) 
\begin{align*}
\int_{\R^2}\int_{\R^2} f_0|\V^t_1-\V^t_2| |u_1(\X_1^t)-u_1(\X_2^t)| \, \dd v\,\dd x &\leq  \gamma_1(t)\int_{\R^2}\int_{\R^2} f_0(x,v)\Psi( |\Z_1^t-\Z_2^t|^2)   \dd v\,\dd x\\
&\leq \gamma_1(t)\Psi( Q(t)).
\end{align*}
For the second integral in the r.h.s. we write back the variables $(x,v)$ to perform the following change of variables $(x',v')=(\X_2^t(x,v),\V_2^t(x,v))$ (recall formula \eqref{eq:charfk} for $f_2$)
\begin{align*}
&  \int_{\R^2}\int_{\R^2} f_0(x,v) |u_1(t,\X_2^t(x,v))-u_2(t,\X_2^t(x,v))|^2  \dd v\,\dd x\\
&=\int_{\R^2}\int_{\R^2} f_2(t,x',v') |u_1(t,x')-u_2(t,x')|^2  \dd v'\,\dd x' \\
&= \int_{\R^2} \rho_2(t,x') |u_1(t,x')-u_2(t,x')|^2\dd x',
\end{align*}
and we use eventually the crucial estimate \eqref{ineq:crucial} to control $\rho_2$ uniformly : 
\begin{align*} 
\int_{\R^2}\int_{\R^2} f_0 |u_1(t,\X_2^t)-u_2(t,\X_2^t)|^2  \dd v\,\dd x
\leq \|\rho_2(t)\|_{\infty}\|w(t)\|_{2}^2
&\leq K\|w(t)\|_{2}^2,
\end{align*}
Gathering  in \eqref{ineq:Q'} the estimates that we obtained before, we get the result.
\end{proof}

Since $\Psi^{-1}$ is not integrable near $0$, the Osgood uniqueness criterion together with Lemma \ref{lem:dyna-Q} will ensure  that the addition of $Q$ in our energy functional does not deteriorate it. More precisely, recalling \eqref{ineq:w:gron} we have obtained for $t\in[0,T_0]$
\begin{multline*}
\|w(t)\|_{2}^2+Q(t) + \frac{3}{4}\int_0^t \|\nabla w(s)\|_{2}^2\,\dd s \\\leq K \int_0^t   \gamma(s)\|w(s)\|_2^2 \dd s + \int_0^t \gamma(s)(Q(s)+\Psi(Q(s))\,\dd s + \int_0^t A(s) \,\dd s ,
\end{multline*}
where $\gamma\in \Ll^1(\R_+)$. Since $\Psi$ is increasing, introducing $H(t):=\|w(t)\|_2^2 + Q(t)$ we can rewrite the previous estimate like this: 
\begin{multline}\label{ineq:gronwall-1}
H(t) + \frac{3}{4}\int_0^t \|\nabla w(s)\|_{2}^2\,\dd s \leq K\int_0^t   \gamma(s) \Big(H(s)+ \Psi(H(s))\Big)\,\dd s + \int_0^t A(s) \,\dd s.
\end{multline}

\medskip

%\subsection{End of the proof}

\subsubsection{Step 3: Use of the Hardy's Maximal function}
In order to use a Gronwall estimate in \eqref{ineq:gronwall-1}, we have to show that $A(s)$ can be controlled in term of $Q(s)$, $\|w(s)\|_2^2$ and a 
small part of $\|\nabla w(s)\|_2^2$. In this paragraph all the estimates are established for $t\in[0,T_0]$, thus allowing the use of Lemma \ref{lem:trajec-bis} and Lemma \ref{lem:dyna-Q}.
%  We first let $T_1\leq T_0$ be maximal such that 
% \begin{equation}\label{def:T1}2 \max(e^{2T}{\sup_{[0,T]} \|\Gamma_R(f_0)(s,\cdot)\|_{\Ld^1(\R^2)}},1)[\|w(t)\|_{2}^2+Q(t)]<e^{-1}\quad \text{on } [0,T_1),
% \end{equation}\textcolor{red}{
% where we recall that $R$ is defined by
% \begin{equation*}R=e^T(\|u_1\|_{\Ld^1(0,T;\Ld^\infty(\R^2))}+\|u_2\|_{\Ld^1(0,T;\Ld^\infty(\R^2))}).\end{equation*}}

% Such $T_1$ exists by Lemma \ref{lem:trajec-bis} and because $w$ belongs to $\mathscr{C}^0(0,T;\Ld^2(\R^2))$ with $w(0,\cdot)=0$.

We first rewrite $A(t)$ in terms of $f_1-f_2$ in the following way,
\begin{align*}
A(t)=\int_{\R^2} \int_{\R^2} (f_1-f_2)(t,x',v')  w(t,x') \cdot (v-u_2(t,x')) \, \dd v' \,\dd x'.
\end{align*}
We use once more the change of variables  $(x',v') =( \Z_k^t)^{-1}(x,v)$ ((recall formula \eqref{eq:charfk} for $f_1$ and $f_2$) and avoid specifying all the variables for legibility in the resulting integral:
\begin{align*}
A(t)&=\int_{\R^2} \int_{\R^2} f_0(x,v) \Big[w(\X_1^t) \cdot (\V_1^t-u_2(\X_1^t))-w(\X_2^t) \cdot (\V_2^t-u_2(\X_2^t))\Big] \, \dd v\, \dd x \\
&= \int_{\R^2} \int_{\R^2} f_0(x,v) w(\X^t_1)\cdot (\V^t_1-\V^t_2+u_2(\X_2^t)-u_2(\X^t_1))\,\dd v\,\dd x\\
&+\int_{\R^2} \int_{\R^2} f_0(x,v) \left[w(\X^t_1)-w(\X^t_2)\right]\cdot (\V^t_2-u_2(\X_2^t))\,\dd v\,\dd x\\
&=: A_1(t)+A_2(t).
\end{align*}
On the one hand, we have by applying  Lemma \ref{lem:trajec-bis} and Lemma~\ref{raducbis}  to $u_2$ (and noting $\gamma_2$ the corresponding $\Ll^1(\R_+)$ function) and the estimate \eqref{ineq:crucial} for the last line 
\begin{equation*}\begin{split}
|A_1(t)|& \leq \frac{1}{2} \int_{\R^2} \int_{\R^2} f_0(x,v) |w(\X_1^t)|^2 \,\dd v\,\dd x 
+  \frac{1}{2} \int_{\R^2} \int_{\R^2} f_0(x,v)|\V_1^t-\V_2^t|^2 \,\dd v\,\dd x \\ 
&+ \int_{\R^2} \int_{\R^2} f_0(x,v) |w(\X_1^t)||u_2(\X_2^t)-u_2(\X_1^t)| \,\dd v\,\dd x\\
&\leq \frac{1}{2}\|\rho_1(t)\|_\infty \|w(t)\|_2^2+ \frac{1}{2}Q(t)+
\gamma_2(t) \int_{\R^2} \int_{\R^2} f_0(x,v) |w(\X_1^t)|\Psi(|\Z_2^t-\Z_1^t|) \,\dd v\,\dd x,\\
&\leq K \Big(\|w(t)\|_2^2+Q(t)\Big)+
\gamma_2(t) \int_{\R^2} \int_{\R^2} f_0(x,v) |w(\X_1^t)|\Psi(|\Z_2^t-\Z_1^t|) \,\dd v\,\dd x.
\end{split}\end{equation*}
Since $\Psi$ is increasing, we have, setting $$\Y(t):=|w(\X_1^t)|+|\Z_2^t-\Z_1^t|,$$ 
noticing that 
$\tau \Psi(\tau)\leq \tau^2+\Psi(\tau^2)$ for all $\tau\geq 0$ and
using Jensen's inequality (recall that $\Psi$ is concave and $f_0$ normalized by \eqref{ass:f0norm}), that
\begin{equation*}\begin{split}
 \int_{\R^2} \int_{\R^2} f_0(x,v)& |w(\X_1^t)|\Psi(|\Z_2^t-\Z_1^t|) \,\dd v\,\dd x
\\
&\leq \int_{\R^2} \int_{\R^2} f_0(x,v)\Y(t)\Psi(\Y(t)) \,\dd v\,\dd x\\
&\leq  \int_{\R^2} \int_{\R^2} f_0(x,v)\Psi((\Y(t)^2) \,\dd v\,\dd x+\int_{\R^2} \int_{\R^2} f_0(x,v)\Y(t)^2 \,\dd v\,\dd x\\
&\leq \Psi\left(\int_{\R^2} \int_{\R^2} f_0(x,v)\Y(t)^2 \,\dd v\,\dd x\right)+\int_{\R^2} \int_{\R^2} f_0(x,v)\Y(t)^2 \,\dd v\,\dd x.
\end{split}\end{equation*}
On the other hand, we have, using \eqref{ineq:crucial} another time 
\begin{equation*}\begin{split}
\int_{\R^2} \int_{\R^2} f_0(x,v)\Y(t)^2 \,\dd v\,\dd x&\leq 2\int_{\R^2} \int_{\R^2} f_0(x,v) |w(\X_1^t)|^2\,\dd v\,\dd x+2Q(t)\\
&=2\int_{\R^2} \int_{\R^2} f_1(t,x',v') |w(t,x')|^2\,\dd v'\,\dd x'+2Q(t)\\
&\leq 2\|\rho_1(t)\|_{\infty}\|w(t)\|_2^2+2Q(t)\\
&\leq K(\|w(t)\|_2^2 + Q(t)).
\end{split}\end{equation*}
Finally we obtained the following estimate for $t\in [0,T_0]$:
\begin{align}
\label{ineq:A1}|A_1(t)| \leq K H(t) + \ffi_2(t) \left[K H(t) + \Psi(K H(t))\right],
\end{align}
where we recall that $H(t):=\|w(t)\|_2^2 + Q(t)$.

\vspace{2mm}

We can therefore focus on $A_2$, for which we are going to use properties of the maximal function of $w\in\H^1(\R^2)$, that are given in Proposition \ref{propo:max}. Recall the expression of $A_2$ : 
\begin{align*}
A_2(t)=\int_{\R^2} \int_{\R^2} f_0(x,v) \left[w(\X^t_1)-w(\X^t_2)\right]\cdot (\V^t_2-u_2(\X_2^t))\,\dd v\,\dd x.
\end{align*} 
We then invoke the a.e. pointwise estimate \eqref{ineq:max:2} to infer the following control for $A_2$:
\begin{align*}
|A_2(t)|\leq C \int_{\R^2}\int_{\R^2} f_0(x,v) |\X^t_1-\X^t_2| |\V_2^t -u_2(\X_2^t)| \Big[M |\nabla w|(\X^t_1) + M |\nabla w|(\X^t_2)\Big] \,\dd v\,\dd x.
\end{align*}
Therefore for all $\alpha>0$ and $t\in[0,T_0]$ (changing the value of $C$)
\begin{multline}
|A_2(t)|\leq  \frac{C}{\alpha}
\int_{\R^2} \int_{\R^2} f_0(x,v) |\X^t_1-\X^t_2|^2(1+|u_2(\X_2^t)|^2) \,\dd v\,\dd x \\
\label{ineq:A2}
 + \alpha \int_{\R^2}\int_{\R^2} f_0(x,v) \big(|\V^t_2|^2+1\big)\Big[(M |\nabla w|(\X^t_1))^2 + (M |\nabla w|(\X^t_2))^2\Big] \,\dd v\,\dd x.
\end{multline}
The first double integral in the right-hand side is bounded by 
$$ \frac{C}{\alpha} Q(t)(1+\|u_2(t)\|_\infty^2)\leq \frac{C}{\alpha} \gamma(t) Q(t),$$
where $\gamma \in \Ll^1(\R_+)$, recalling that $u_2\in\Ll^2(\R_+;\Ld^\infty(\R^2))$. For the second double integral, we notice first that (we use here \eqref{ineq:Z})
\begin{align*}
|\V^t_2|^2 \leq 2 (|\V^t_2-\V^t_1|^2 + |\V^t_1|^2 ) \leq 1 + 2|V_1^t|^2.
\end{align*}
In particular, after the usual change of variables $(x,v) = \Z_k^t(x',v')$ we get
\begin{multline*}
\int_{\R^2} \int_{\R^2} f_0(x,v)\big(|\V^t_2|^2+1\big)\Big[(M |\nabla w|(\X^t_1))^2 + (M |\nabla w|(\X^t_2))^2\Big] \,\dd v\,\dd x\\ \leq \int_{\R^2} \Big[2 m_2 f_1 + m_2 f_2 + 2 \rho_1 + \rho_2\Big](M|\nabla w|)^2 \,\dd x'.
\end{multline*}
Using estimate \eqref{ineq:crucial}, we obtain from \eqref{ineq:A2} the following inequality for $A_2$
\begin{align*}
|A_2(t)|\leq \frac{C}{\alpha} \gamma(t) Q(t) + K \alpha \int_{\R^2} (M|\nabla w)|)^2 \dd x',
\end{align*}
 and using  \eqref{ineq:max} (that is: the $\Ld^2(\R^2)$ continuity of the maximal function) we get eventually for $t\in[0,T_0]$
\begin{align*}
|A_2(t)| \leq \frac{K}{\alpha} \gamma(t) Q(t) + K \alpha \|\nabla w(t)\|_2^2.
\end{align*}
Summing the previous inequality to \eqref{ineq:A1} and changing the definition of $K>0$ and $\gamma\in\Ll^1(\R_+)$, we have for any $t\in[0,T_0]$ and any $0<\alpha<1$
\begin{align}\label{ineq:A}
|A(t)| \leq \frac{K}{\alpha} \gamma(t) H(t) + \gamma(t) \Psi( K H(t)) + K\alpha \|\nabla w(t)\|_2^2,
\end{align}
where we recall $H(t) = \|w(t)\|_2^2 + Q(t)$.
% On the other hand, we recall that thanks to Corollary \ref{coro:lpeve2} we know that  $m_i f_k$ belongs to 
% $\Ld^\infty(0,T;\Ld^\infty(\R^2))$, for $i=1,2$ and that these norms are controlled only in terms of $T$, $\|u_k\|_{\Ld^1(0,T;\Ld^\infty(\R^2))}$ and $f_0$.
% Thus, recalling the inequality \eqref{eq:defR} above, using a change of variable and relying on the fact  that \textcolor{red}{$|\V^t_2|^2\leq 2|\V^t_1|^2+2$},  we are led 
% to the following estimate on \textcolor{red}{$[0,T_0]$}
% \begin{align*}
% |A_2(t)| &\leq \varphi(t) \alpha^{-1} Q(t) +\alpha  \int_{\R^2} \Big[2m_2 (f_1)+m_2(f_2) + 3\rho_1 + \rho_2\Big] (M|\nabla w|)^2  \dd x \\
% &\leq \varphi(t) \alpha^{-1} Q(t) +C\alpha \|2m_2(f_1)+m_2(f_2)+3\rho_1+\rho_2\|_\infty \|\nabla w\|_2^2\\
% &\leq K\varphi(t) (\alpha^{-1} Q(t) +\alpha \|\nabla w\|_2^2),
% \end{align*}
% where we have used \eqref{ineq:max} (the supremum in time is taken on $[0,T_0]$). 
\subsubsection{Step 4: end of the proof}
Without loss of generality we can assume in the previous computations that the constant $K$ is greater that $1$ so that $\Psi(\cdot)\leq \Psi(K\cdot)$ (because $\Psi$ is non-decreasing). Using \eqref{ineq:A} in \eqref{ineq:gronwall-1}, we have thus obtained the existence of $K>1$ and $\gamma\in\Ll^1(\R_+)$ such that, for any $0<\alpha<1$ and for any $t\in [0,T_0]$
\begin{align*}
H(t) + \frac{3}{4}\int_0^t \|\nabla w(s)\|_{2}^2\,\dd s \leq \frac{K}{\alpha}\int_0^t   \gamma(s) \Big(H(s)+ \Psi( K H(s))\Big)\,\dd s + K\alpha \int_0^t \|\nabla w(s)\|_2^2\,\dd s.
\end{align*}
In particular, for $\alpha$ small enough so that $K \alpha \leq 3/K$, since 
% \begin{equation*}\label{ineq:gronwall-2}
% \begin{split}
% \frac{d}{dt} &\left[
% \|w(t)\|_{2}^2+Q(t)\right]+ \frac{3}{4} \|\nabla w\|_{2}^2 \\
% &\leq  K\varphi(t)\left[ \|w\|_2^2+\Psi(Q(t))\right] +K\varphi(t) (\alpha^{-1} Q(t) +\alpha \|\nabla w\|_2^2),
% \end{split}
% \end{equation*}
% where $\varphi\in \Ld^1(0,T)$ and $\Psi$ was defined in~\eqref{Psi}, \textcolor{red}{and where $K$ depends only on $T$, $\|u_k\|_{\Ld^1(0,T;\Ld^\infty(\R^2))}$ and $f_0$.}  % is given by Theorem~\ref{thm:flow}. 
% Taking then $\alpha$ small enough, so that $K\alpha\leq 3/4$, and noting that

$$
\forall a >0,\quad \int_0^a \frac{\dd s}{\Psi( K s)} = +\infty,
$$
we can conclude by Osgood's uniqueness criterion that 
$$\|w(t)\|_{2}^2+Q(t)\equiv 0\quad \text{on }[0,T_0),$$ that is: uniqueness holds on $[0,T_0]$. But $T_0$ depends only on $\|u_k\|_{\Ld^2(0,T;\Ld^\infty(\R^2))}$ for $k=1,2$, we can repeat repeating the argument a finite number of times, and recover the uniqueness on $[0,T]$.

\begin{rem}
Following Remark \ref{rem:normalized}, if \eqref{ass:f0norm} is not satisfied, $T_0$ depends also on $\|f_0\|_1$. The proof is however identical because the renormalized solutions that we consider conserves the integral value of the initial data at later times (this can be proven in a similar way to what we did for the propagation of second moment in paragraph \ref{subsubsec:moments}). Since here $f$ is non-negative we in particular have $\|f(t)\|_1 = \|f_0\|_1$ and the argument can thus be iterated up to $[0,T]$.
\label{rem:normalized2}
\end{rem}

\section{Comments and perspectives}
\label{sec: commentsperspectives}

In this work we have proved that any weak solution of the Vlasov-Navier-Stokes system satisfying the decay hypothesis (\ref{assu:lp}) must be unique. In doing so, the introduction of the Hardy's maximal function in Step 3 of Section \ref{sec: strategydetailed} is the key new tool allowing to close the proof, as a direct approach as in \cite{Loeper} seems impossible. Whether the mentioned assumption can be relaxed or not seems an open question. \par 

The methods developed in this paper might be adapted to derive some suitable stability estimates for the Vlasov-Navier-Stokes system, possibly choosing a slightly different functional $Q$ in the Step 2 of Section \ref{sec: strategydetailed}. \par  

A very interesting question arises in the 3D case, in which the uniqueness issue for Navier-Stokes alone proves already to be much subtler. We hope that under suitable stronger regularity assumptions, %namely in the context of classical solutions, 
some uniqueness results %, possibly under stronger hypothesis, 
might be achieved.

\begin{appendix}

\section{The Vlasov-Navier-Stokes system in $\R^2$}
\label{sec: appendixWeak}

Since the existence of global weak solutions for the Vlasov-Navier-Stokes system on $\R_+\times\R^2$ has not been written explicitly in the literature, we give here a (rather sketchy) overview of the proof following the one used in \cite{bougramou}. 

\vspace{2mm} 

First we replace the Vlasov-Navier-Stokes system by the following approximated one (with smooth, compactly supported initial data)  
\begin{align}
\label{app:u}\partial_t u + (u\star \ffi\cdot \nabla)u -\Delta u + \nabla p &=  \gamma\int_{\R^2}  \chi(v-u)f, \\
\label{app:divu}\div u & = 0, \\
\label{app:f}\partial_t f + v \cdot \nabla_x f + \nabla_v \cdot[f \chi(u-v)] &=0,
\end{align}
where $\ffi\in\mathscr{D}(\R^2)$ is a test function ($\star$ is the convolution in the space variable only), $\gamma\in\mathscr{D}(\R^2)$ is a nonnegative cut-off function (which depends only on $x$)  and $\chi\in\mathscr{D}(\R^2)^2$ is an odd vector-valued truncation function such that $0 \leq z\cdot \chi(z)$  for any $z\in\R^2$. Existence for this approximated system can be easily obtained thanks to the following fixed-point procedure. We consider the space $E:=\Ld^2(0,T;\H_{\textnormal{div}}^1(\R^2))$ and we use Schaefer's fixed-point Theorem: if a continuous map $\Theta:E\times[0,1]\rightarrow E$ sends bounded sets on compact sets, vanishes on $E\times\{0\}$ and is such that the set of all fixed points of the familly $(\Theta(\cdot,\sigma))_\sigma$ is bounded, then $\Theta(\cdot,1)$ has at least one fixed-point. 

\vspace{2mm}

Starting from $u\in E$, one can first consider the unique weak solution $f_u$ of \eqref{app:f}  (with initial condition $f^0)$ as obtained by the DiPerna-Lions theory. Then, for any $\sigma \in[0,1]$, one defines $\Theta(\sigma,u) = \gamma \widetilde{u}$ where $\widetilde{u}$ is the solution of the following regularized Navier-Stokes equation (with initial condition $\sigma u^0$)
\begin{align}
\label{app:utilde}\partial_t \widetilde{u} + (\widetilde{u} \star \ffi) \cdot \nabla \widetilde{u} - \Delta \widetilde{u} +\nabla p &= \sigma \gamma \int_{\R^2} \chi(v-u) f_u,\\
\div \widetilde{u} &= 0.
\end{align}
Of course when $\sigma=0$, one has $\Theta(\sigma,u)=0$, by uniqueness of the solution to the regularized Navier-Stokes equation. 

\vspace{2mm}

\textbf{Compactness.} Let us check that $\Theta$ sends bounded sets on compact ones. Since $\chi\in\Ld^\infty(\R^2)^2$ and $f^0$ is compactly supported, $f_u$ is also compactly supported (independently of $u$). In particular the $\Ld^2(0,T;\Ld^2(\R^2))$ norm  of the drag force  
\begin{align*}
\sigma \gamma \int_{\R^2} \chi(v-u) f_u
\end{align*}
is bounded for all $u$ by a constant depending only on $f^0$ (because $\sigma \leq 1$). Standard results fot the regularized Navier-Stokes equation thus lead to the desired compactness, thanks to the cut-off function $\gamma$ at the last step to allow the use of compact Sobolev embeddings. %(which are not true on the whole space).  

\vspace{2mm} 

\textbf{Continuity.} This compactness property of $\Theta$ and the uniqueness of the solution of the regularized Navier-Stokes equation, give the continuity of $\Theta$. Indeed, if $(\sigma_n)_n \rightarrow \sigma$ and $(u_n)_n\rightarrow u$, DiPerna-Lions Theory ensures that $(f_{u_n})_n$ converges to $f_u$ in a sufficiently strong sense to pass to the limit in the drag force. By the aforementionned compactness property, $(\widetilde{u}_n)_n$ has a cluster point in $\Ld^2(0,T;\H_{\textnormal{loc}}^1(\R^2))$ which can only be the unique solution of the regularized Navier-Stokes equation with the following right hand side  
\begin{align*}
\sigma \gamma \int_{\R^2} \chi(v-u) f_u,
\end{align*}
that is $\widetilde{u}$. 

\vspace{2mm} 

\textbf{\emph{A priori} estimate.} If $u=\Theta(\sigma,u)$, that is $u=\gamma \widetilde{u}$, multiplying \eqref{app:utilde} by $\widetilde{u}$ and \eqref{app:f} by $\sigma |v|^2/2$ and integrating by parts leads to the energy estimate  \begin{align*}
\frac{\sigma}{2} M_2 f_u(t)  + \frac{1}{2}\|\widetilde{u}(t)\|_2^2  &+ \int_0^t \|\nabla \widetilde{u}(s)\|_2^2 \dd s + \sigma \int_0^t \int \chi(v-u)\cdot (v-u) f_u \\
 &= \frac{\sigma}{2} M_2 f^0 + \frac{\sigma}{2}\|u^0\|_2^2.
\end{align*}
Since $z \cdot \chi(z) \geq 0$, the previous equality ensures that any fixed-point of $\Theta(\sigma,\cdot)$ remains in a ball of $E$, the radius of which depending only on $f^0$ and $u^0$ (because $\sigma \leq 1$).    

\vspace{2mm} 

\textbf{Back to Vlasov-Navier-Stokes.} For the last step, one only needs to perform the following limits : $(\ffi_n)_n\rightharpoonup \delta$, $(\gamma_n)_n \rightarrow 1$ and $(\chi_n)_n \rightarrow \textnormal{Id}$. The corresponding solution $(u_n)_n$ is weakly compact in $\Ld^2(0,T;\H^1(\R^2))\cap \Ld^\infty(0,T;\Ld^2(\R^2))$ and strongly compact in $\Ld^2(0,T;\Ld_{\textnormal{loc}}^2(\R^2))$ thanks to Aubin-Lions Lemma. This strong convergence is transfered to $(f_{u_n})_n$ thanks to the DiPerna-Lions theory,  allowing to pass to the limit each nonlinearity when integrated against a compactly supported test function.

\section{Estimates for the maximal function on the torus}
\label{sec: AppendixTorus}

We explain here how to obtain an analogue of Proposition~\ref{propo:max} for functions defined on the torus $\T^2$ (the general case  of $\T^d$ for $d \in \N^*$ is actually identical).
Although this must certainly be very classical, we gather these remarks in this appendix for the record, since we have not been able to find them in the literature.

For $g\in \Ld^1(\T^2)$ the corresponding maximal function is defined as
\begin{align*}
M g(x):=\sup_{0<r\leq \sqrt{2}/2}\frac{1}{|\B_r(x)|}\int_{\B_r(x)}|g|(y)\,\dd y,
\end{align*}
where $\B_r$ stands for balls for the canonical geodesic distance on the torus.

\begin{propo}\label{propo:maxT}
 If $g\in\Ld^2(\T^2)$ then so is $M g$ and furthermore
\begin{align}\label{ineq:maxT}
\|M g\|_2 \lesssim \|g\|_2.
\end{align}
If furthermore $g\in\H^1(\T^2)$ then, for a.e. $x,y\in\T^2$  there holds 
\begin{align}\label{ineq:max:2T}
|g(x)-g(y)|\lesssim |x-y| ( M |\nabla g|(x)+M |\nabla g|(y)),
\end{align}
where $|x-y|$ has to be understood as the geodesic distance between $x$ and $y$.
\end{propo}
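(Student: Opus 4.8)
The plan is to deduce both inequalities from their Euclidean counterparts in Proposition~\ref{propo:max} by lifting $g$ to its $\mathbb{Z}^2$-periodic extension $\widetilde g$ on $\R^2$. The only genuine difficulty is that $\widetilde g$ belongs neither to $\Ld^2(\R^2)$ nor to $\H^1(\R^2)$, so the Euclidean statements cannot be invoked verbatim; the way around this is to exploit that the radius of the balls defining $Mg$ is capped at $\sqrt2/2$, which is precisely the diameter of $\T^2$. I will rely throughout on two elementary geometric facts. First, writing $\Pi:\R^2\to\T^2$ for the canonical projection and $\tilde x$ for a lift of $x$, one has $\Pi(\B_r(\tilde x))=\B_r(x)$ (Euclidean ball on the left, geodesic ball on the right); moreover, for $0<r\leq \sqrt2/2$ the restriction of $\Pi$ to $\B_r(\tilde x)$ is at most $4$-to-$1$ (the diameter $2r\leq\sqrt2$ stays below twice the minimal distance of the lattice $\mathbb{Z}^2$), and the two volumes are comparable, $|\B_r(x)|\leq|\B_r(\tilde x)|\leq 2|\B_r(x)|$. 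Second, because $\sqrt2/2$ is exactly the diameter of $\T^2$, one has $\B_{\sqrt2/2}(x)=\T^2$, so the full integral $\int_{\T^2}|h|\,\dd y$ is itself one of the admissible averages entering $Mh(x)$.

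For \eqref{ineq:maxT} I would fix a lift $\tilde x\in[0,1)^2$ and the truncation $h:=\widetilde g\,\mathbf{1}_{(-1,2)^2}\in\Ld^2(\R^2)$, which satisfies $\|h\|_2^2=9\|g\|_{\Ld^2(\T^2)}^2$ and coincides with $\widetilde g$ on every Euclidean ball $\B_r(\tilde x)$ with $r\leq\sqrt2/2<1$. Comparing the torus average over $\B_r(x)$ with the Euclidean average over $\B_r(\tilde x)$, and using the multiplicity and volume comparison above to absorb the bounded distortion when $1/2<r\leq\sqrt2/2$, yields the pointwise domination $Mg(x)\leq C\,(Mh)(\tilde x)$ with $C$ independent of $g$. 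Integrating the square of this inequality over the fundamental domain and applying the Euclidean bound \eqref{ineq:max} to $h$ gives $\|Mg\|_{\Ld^2(\T^2)}\lesssim\|h\|_2\lesssim\|g\|_{\Ld^2(\T^2)}$, which is \eqref{ineq:maxT}.

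For the Sobolev-type estimate \eqref{ineq:max:2T} I would use that the Euclidean inequality \eqref{ineq:max:2} is in fact \emph{local}: its standard proof, a telescoping sum of Poincaré inequalities over the balls $\B_{2^{-k}|x-y|}$ together with a common intermediate ball, only involves averages of $|\nabla g|$ over balls of radius at most $2|x-y|$. Given $x,y\in\T^2$ with geodesic distance $d:=|x-y|\leq\sqrt2/2$, I pick lifts $\tilde x,\tilde y$ realizing this distance, so $|\tilde x-\tilde y|=d$, together with a truncation $h$ of $\widetilde g$ that equals $\widetilde g$ on a fixed window containing all Euclidean balls of radius $\leq 2d$ centered at $\tilde x$ and $\tilde y$. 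Applying the local form of \eqref{ineq:max:2} to $h\in\H^1(\R^2)$ gives $|g(x)-g(y)|=|h(\tilde x)-h(\tilde y)|\lesssim d\,\big((M|\nabla h|)(\tilde x)+(M|\nabla h|)(\tilde y)\big)$, where all the averages involved have radius $\leq 2d$. It then remains to dominate these Euclidean maximal functions by their torus counterparts: for radii $\leq\sqrt2/2$ this is the multiplicity and volume comparison again, while for the larger radii (up to $\sqrt2$) appearing in the local estimate the average of $|\widetilde{\nabla g}|$ is controlled by $\int_{\T^2}|\nabla g|$, which by the second geometric fact equals the admissible average at $r=\sqrt2/2$ and is therefore $\leq (M|\nabla g|)(x)$. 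Combining, $(M|\nabla h|)(\tilde x)\lesssim (M|\nabla g|)(x)$ and likewise at $y$, which is \eqref{ineq:max:2T}.

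The main obstacle, as indicated, is purely the non-integrability of the periodic lift; it is defused entirely by the radius cap $\sqrt2/2$ and the resulting bounded-multiplicity property of $\Pi$, which makes the truncations harmless and renders the two maximal functions comparable in both directions. An alternative route, avoiding the lift, would be to rerun the Euclidean arguments directly on $\T^2$ — the $\Ld^2$ bound through a Vitali covering lemma for geodesic balls, and the pointwise estimate through the Poincaré inequality on geodesic balls of radius $\leq\sqrt2/2$ (which are John domains with uniform constants) — and I would record this as a remark while carrying out the transfer argument as the primary proof.
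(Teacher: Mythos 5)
Your proposal is correct, and it splits against the paper in an interesting way. For the $\Ld^2$ bound \eqref{ineq:maxT} you follow essentially the paper's own argument: the paper likewise truncates the periodic extension (using a $5\times5$ block of $1+8+16$ copies instead of your $3\times3$ window, which is equally sufficient since the radius cap $\sqrt2/2<1$) and concludes via $\|Mg\|_2\lesssim\|M\tilde g\|_{\Ld^2([0,1)^2)}\leq\|M\tilde g\|_{\Ld^2(\R^2)}\lesssim\|g\|_2$; the pointwise domination $Mg(x)\leq C\,Mh(\tilde x)$ that you spell out with the volume comparison $|\B_r(x)|\leq|\B_r(\tilde x)|\leq 2|\B_r(x)|$ is exactly what the paper leaves implicit in the phrase ``by definition of the maximal function on the torus''. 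For the pointwise estimate \eqref{ineq:max:2T}, however, the routes diverge: the paper takes precisely what you relegate to a closing remark, namely rerunning the Euclidean argument directly on the torus (it invokes \cite[Lemmas 1--3]{AcerbiFusco} \emph{mutatis mutandis} for geodesic balls of radius $\leq\sqrt2/2$), whereas your primary proof is a transfer argument: lift to $\R^2$, use the locality of \eqref{ineq:max:2} (only radii $\leq 2|x-y|$ enter the telescoping Poincar\'e proof), and dominate the large-radius Euclidean averages by the full-torus average, which is itself admissible at $r=\sqrt2/2$ since $\B_{\sqrt2/2}(x)$ exhausts $\T^2$ up to a null set. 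The paper's route is shorter because nothing needs truncating; yours buys a genuine black-box reduction to the Euclidean statement.

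Two details of your transfer argument should be made explicit, though neither is a conceptual gap. First, the ``local form'' of \eqref{ineq:max:2} with radius-restricted maximal functions is not literally Proposition \ref{propo:max} as stated, so you must either quote the restricted version from Acerbi--Fusco or redo the dyadic telescoping to justify it --- you acknowledge this, but in a final write-up it needs a proof or a precise citation. Second, a sharp indicator truncation such as your $h=\widetilde g\,\mathbf{1}_{(-1,2)^2}$, which is perfectly fine in the first part (only $\Ld^2$ and $\Ld^1_{\textnormal{loc}}$ structure is used there), is \emph{not} in $\H^1(\R^2)$: it jumps across the boundary of the window. In the second part you must instead take $h=\widetilde g\,\chi$ with $\chi$ a smooth cutoff equal to $1$ on the window; since all the maximal averages you invoke have radius $\leq 2d\leq\sqrt2$ and are centered at $\tilde x,\tilde y$ in the window, they only see the region where $\nabla h=\nabla\widetilde g$, so this fix is harmless. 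With these two adjustments recorded, your proof is complete and valid.
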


\begin{proof} 
The proof of \eqref{ineq:max:2T} is the same as in the whole space case, and  follows directly, \emph{mutatis mutandis}, from \cite[Lemmas 1--3]{AcerbiFusco}.

Let's prove \eqref{ineq:maxT}. Fix $g\in\Ld^2(\T^2)$. We identify $\T^2$ with $[0,1)^2$ with periodic boundary conditions, and endowed with the normalized Lebesgue measure. Consider $\tilde{g} \in \Ld^2(\R^2)$ constructed as $1+8+16$ copies of $g$: one in the canonical cell $[0,1)^2$, $8+16$ others in the neighbouring surrounding cells, and equal to 
$0$ elsewhere.
By~\eqref{ineq:max},
$$
\|M \tilde{g}\|_2 \lesssim \|\tilde{g}\|_2 \lesssim   \|g\|_2.
$$
We remark that 
$$
\|M \tilde{g}\|_{L^2([0,1)^2)} \leq \|M \tilde{g}\|_{2} 
$$
and by definition of the maximal function on the torus, we have
$$
\| M g\|_{2} \lesssim \|M \tilde{g}\|_{L^2([0,1)^2)},
$$
so that 
$$
\|M g\|_2 \lesssim \|g\|_2.
$$

\vspace{2mm}

%Let $g\in\H^1(\T^2)$. We consider $\tilde{g} \in \Ld^2(\R^2)$ as previously; %, but this time with $1+8+16+32+64$ cells; 
%however  $\tilde{g}$ does not belong to $\H^1(\R^2)$. To remedy the situation, we consider $\overline{g} \in \H^1(\R^2)$ by  multiplying $\tilde{g}$ by a smooth cutoff function $0\leq \varphi \leq 1$ that is equal to $1$ in the first $1+8$ cells, and with support in a compact strictly included in the $1+8+16$ cells.
%
%By~\eqref{ineq:max:2}, for almost all $x,y$ in the first $1+8$ cells, we have
%$$
%|\overline{g}(x)-\overline{g}(y)|\lesssim |x-y|(M |\nabla \overline{g}|(x)+M |\nabla \overline{g}|(y)).
%$$
%Since and $\overline g = \tilde{g}$ on the first cells, we have
%$|\overline{g}(x)-\overline{g}(y)|= |{g}(x)-{g}(y)|$.
%Moreover, since $\nabla \overline{g} = \tilde g \nabla \varphi + \varphi  \nabla \tilde g$,
%we have for $z=x,y$,
%$$
%\begin{aligned}
%M |\nabla \overline{g}|(z) & 
%= \sup_{r>0}\frac{1}{|B_r(z)|}\int_{B_r(z)}|\nabla \overline{g}|(u)\,\dd u \\
%& 
%\lesssim  \sup_{r>0}\frac{1}{|B_r(z)|}\int_{B_r(z)} [| \tilde{g}|(u) +| \nabla \tilde{g}|(u)] \,\dd u \\
%& 
%\lesssim \sup_{0<r\leq \sqrt{2}}\frac{1}{|B_r(z)|}\int_{B_r(z)} [| \tilde{g}|(u) +| \nabla \tilde{g}|(u)]  \,\dd u
%\end{aligned}
%$$
%(by ``periodicity'' of $\tilde g$ in the cells).
%We deduce
%$$
%M |\nabla \overline{g}|(z)  \lesssim M | {g}|(z) + M | \nabla {g}|(z)
%$$
%and we deduce our claim.
\end{proof}

%We remark that the estimate~\eqref{ineq:max:2T} involves the maximal function of $g$, which is different from the whole space case. In the proof of Theorem~\ref{thm:uni} in the torus, this gives rise to a new term in the estimates of the term $A_2$, but which is straightforwardly dealt with thanks to the Gronwall estimate.

\end{appendix}

\bibliographystyle{abbrv}
\bibliography{vns_unique}

\end{document}